\documentclass[12pt,a4paper]{article}
\usepackage{amssymb}
\usepackage{amsmath}
\usepackage{amsthm}
\usepackage{float}
\usepackage[T1]{fontenc}
\usepackage[utf8]{inputenc}
\usepackage[british]{babel}
\usepackage{geometry}
\usepackage{caption}
\usepackage{graphicx}
\usepackage{color}
\usepackage{nomencl}

\usepackage[all]{xy}

\makeglossary
\makenomenclature

\newcommand{\Hyp}{\mathbb{H}}
\newcommand{\C}{\mathcal{C}}

\newcommand{\R}{\mathbb{R}}
\newcommand{\D}{\mathbb{D}}
\newcommand{\SU}{\mathbb{S}^1}
\newcommand{\N}{\mathbb{N}}
\newcommand{\Z}{\mathbb{Z}}
\newcommand{\F}{\mathcal{F}}

\newcommand{\G}{\Gamma}
\newcommand{\g}{\gamma}

\newcommand{\tv}{\rightarrow}

\newcommand{\Lam}{\mathcal{L}}

\newtheorem{theorem}{Theorem}[section]
\newtheorem{lemme}[theorem]{Lemma}

\newtheorem{prop}[theorem]{Proposition}

\newtheorem{corollaire}[theorem]{Corollary}

\newtheorem{defi}[theorem]{Definition}

\newtheorem*{TheoremNoCount1*}{Theorem A} 
\newtheorem*{TheoremNoCount2*}{Theorem B}

\DeclareMathOperator{\Isom}{Isom}
\DeclareMathOperator{\Card}{Card}

\DeclareMathOperator{\supp}{supp \,}

\DeclareMathOperator{\AdS}{AdS}

\DeclareMathOperator{\Curr}{Curr}

\title{Critical exponent for geodesic currents}
\author{Olivier Glorieux}
\begin{document}
\maketitle
\begin{abstract}
For any geodesic current we associated a quasi-metric space. For a  subclass of geodesic currents, called filling, it defines a metric and we study  the critical exponent associated to this space. We show that is is equal to the exponential growth rate  of the intersection function for closed curves. 
\end{abstract}

\section{Introduction}
\subsection{Geodesic currents}

%$\overset{\frown}$

%$\AdS^{n+1}$ is the $-1$level set of $q_{n,2}(x) = \sum_{i=1}^n x^2_i -x_{n+1}^2 -x_{n+1}^2$, endowed with the restriction of the quadratic form $q_{n,2}$. 

Let $S$ be a surface of genus greater than 2 and $\G$ its fundamental group.  Let $\D$ be the unit disk and $\Hyp^2$, be $\D$ endowed with the hyperbolic metric. In order to define geodesic current, we fix a hyperbolic metric on $S$ and we see $S$ as the quotient $\Hyp^2/\G_0$ where  $\G_0$ is a fixed discrete subgroup of $\Isom_0(\Hyp^2) $, isomorphic to $\G$.\\
The space of unoriented geodesics of $\Hyp^2$ is identified with $\SU\times  \SU/\Z_2$. 
\begin{defi}
The set of geodesic currents, $\Curr(S)$, is by definition the set of $\G_0$ invariant Borel measures  on the space of geodesics of $\Hyp^2$.  It is endowed with the topology of weak-convergence. 
\end{defi}

Let $\C$ be the set of homotopy class of closed curves on $S$. It is in 1-to-1 correspondence with the set of closed geodesics on $S$ endowed with its hyperbolic metric. The set $\C$ is also in 1-to-1 correspondence with the set of conjugacy classes of $\G_0$.  For any $c\in \C$, we say that $\g\in \G_0$ is a representative of  $c$ if they are in the same conjugacy class. In particular the axis of $\g$ in $\Hyp^2$ projects to the isotopy class of $c$ on $S$. To a closed curve $c\in \C$, we can associate a corresponding current, this is the Dirac measure on the collection of its lifts in $\Hyp^2$. F. Bonahon showed that the set of (weighted) closed curves is dense in $\Curr(S)$.

There is a map $i$ on  $\Curr(S)\times \Curr(S)$, called intersection,  which has been defined by F. Bonahon in \cite{bonahon1986bouts}. This map is  bilinear, continuous and extends the usual geometric intersection to the set of all currents: for a pair of closed curves, $c_1,c_2$, the intersection $i(c_1,c_2)$ between the two corresponding currents is equal to the geometric intersection. Let $G^t$ be the  set of all pair of transverse geodesics. Any two geodesic currents,  $\eta_1,\eta_2$ define a product measure on $G^t$, the intersection of $\eta_1$ and $\eta_2$ is by definition the total mass for this product measure of a fundamental domain for the action of $\G_0$ on $G^t$.

For a closed curve $c\in \C$,  it is  easy to compute its intersection with any other current, $\eta\in \Curr(S)$ as it is explained in \cite{Otal}. Let $A_\g$ be the axis of $\g\in\G_0$ corresponding to $c$ and let $x\in A_\g$ be any point on this axis. Let $T[x,\g x)$ be the set of geodesics transverse to the  $[x,\g x)$, a fundamental domain for the action of $\g$ on $A_\g$. Then $i(\eta,c)=\eta \left(T[x,\g x)\right)$.

Finally we mention that the set of currents contains some interesting  spaces: 

-We can embedded the set of negatively curved metrics on $S$ in $\Curr(S)$.  Indeed  $\Curr(S)$ can be identified to the set of measures invariant by the geodesic flow on $S$. Then, for a metric $m$, we associated the Liouville current which is the local product of the Riemannian metric and the length on the fiber in $T^1(S)$. This map from negatively curved metrics to geodesic currents  is injective \cite{Otal}.  Moreover if we call $L_m$ the Liouville current associated to $m$, it satisfies for all $c\in \C$,  $i(L_m,c)=\ell_m(c),$ where $\ell_m(c)$ is the length of the unique geodesic representative of $c$ on $S$ endowed with the metric $m$.  This shows that the intersection plays the role of a length. That what we are going to use to define a metric space associated to a current.

-As a consequence of the work of G. Mess, \cite{mess2007lorentz} we can also embedded the set of $\AdS$ quasi-Fuchsian (also called globally hyperbolic spacially compact manifolds) in $\Curr(S)$, as the sum of two Liouville currents. The injectivity is a consequence of  a Theorem of Dal'bo and Kim, \cite{dal2000criterion}. For any $\AdS$ quasi-Fuchsian manifold, $M$, it corresponds $(L_1,L_2)$, two points in the Teichmüller space of $S$ by the Mess parametrization. Then, the associated geodesic current $L_{M} := \frac{1}{2} (L_1+L_2)$ satisfies $i(L_M,c) = \ell_M(c)$ where $\ell_M(c)$ is the length (its Riemannian length) of $c$ in $M$, see \cite{glorieux2015behaviour}.

-Finally, we can embedded the set of quasi-Fuchsian hyperbolic manifolds. Indeed, U. Hamenstadt \cite{hamenstadt2002ergodic} showed this by bringing back the Patterson-Sullivan measure of the limit set of the quasi-Fucshian manifold into $\SU$. However, with this construction there is no reason for the associated current, say $\eta_{QF}$ to satisfies, $i(\eta_{QF},c) =\ell_{QF}(c)$ where $\ell_{QF}(c)$ is the length of the unique geodesic corresponding to $c$ in the quasi-Fuchsian manifold. This is an interesting question, whether or not, there exists such a geodesic current.

\subsection{Critical exponent}
If $S$ is endowed with a metric of negative curvature, then one can define two closely related invariants  which have been deeply studied. \\
The critical exponent, $\delta$, defined through the exponential growth rate of the point in an orbit of $\G$: 
$$\delta :=\limsup_{R\tv +\infty} \frac{1}{R}\log \Card \{\g\in \G \, |\, d(\g x,x)\leq R\},$$
where $d$ is the distance on the universal cover of $S$  coming from the chosen metric and $x$ is any point in it. This does not depend on $x$ thanks to triangle inequality. \\
The geodesic critical exponent, $\delta^g$, defined through the exponential growth rate of closed geodesics:
$$\delta^g :=\limsup_{R\tv +\infty} \frac{1}{R}\log \Card \{c\in \C \, |\, \ell(c) \leq R\},$$
where $\C$ is the set of free homotopy classes of closed curves on $S$, and $\ell(c)$ is the length of the unique geodesic on $S$ in the homotopy class. 

For negatively curved metrics, these two invariants are  well known to be equal, see for example \cite{Knieper}. 
Let $L_m$ be the Liouville current associated to the metric  $m$ on $S$. Then from the above, since $i(L_m,c) =\ell_m(c)$, the critical exponent $\delta^g
 =\limsup_{R\tv +\infty} \frac{1}{R}\log \Card \{c\in \C \, |\, i(L_m,c) \leq R\}$
is equal to the growth  rate of the number of closed geodesics on $(S,m)$.
We define critical exponent for  any geodesic current. 
\begin{defi}
For any current $\eta\in \Curr(S)$, we define its \emph{geodesic critical exponent } $\delta^g_\eta$ by 
$$\delta_\eta^g :=\limsup_{R\tv +\infty} \frac{1}{R}\Card \{c\in \C \, |\, i(\eta,c) \leq R\}.$$
\end{defi}

%In Section \ref{sec - Distances for geodesic currents} we will define a natural notion of critical exponent for any geodesic current through the metric  defined therein. 

%%\subsection{Plan of the paper} 
%This paper contains two distinct parts. First we define a metric space $(Y_\eta, d_\eta)$  associated to a current $\eta$ and study its property. Let $\mathcal{E}:=\{(Y_\eta,d_\eta) \, |\, \eta \in \Curr(S)\},$ the set of all these metric spaces. The group $\G$ acts isometrically on every element of $\mathcal{E}$, therefore we can endow this set with the  Gromov-Hausdorff equivariant topology, introduced by F. Paulin \cite{}. We recall its definition in Definition \ref{def - gh equiv topo}. The first result of this paper is 
%%\begin{TheoremNoCount1*}
%%The map from $\Curr(S)$ to $\mathcal{E}$, $\eta\tv (Y_\eta,d_\eta)$, where $\Curr(S)$ is endowed with weak topology and $\mathcal{E}$ is endowed with Gromov-Hausdorff equivariant topology, is a homomorphism. 
%% \end{TheoremNoCount1*}
%%
%
%
%The second part concerns critical exponents and is more classical. We will prove 
%\begin{TheoremNoCount2*}
%Let $\delta_\eta$ be the metric critical exponent associated to the action of $\G$ on $(Y_\eta,d_\eta)$. Then 
%$$\delta_\eta = \delta_\eta^g.$$
%It is finite if and only if $\eta$ is filling, cf Definition \ref{def filling}.
%\end{TheoremNoCount2*}
%The proof follows the line of an article of G. Knieper \cite{Knieper}.

In Section \ref{sec - Distances for geodesic currents} we define the quasi-distance associated to any geodesic current. We show that the metric is proper for filling currents and therefore the critical exponent is well defined for these currents. 
In Section \ref{sec - critical exponent} we  show that critical exponent is a continuous map on filling currents and coincide with the growth rate of the intersection map for closed geodesics.

\section{Distances for geodesic currents}\label{sec - Distances for geodesic currents}
Our first aim is to define, for every  $\eta\in \Curr(S)$, a distance $d_\eta$ on $\D$. Let us introduce some notations.
\begin{defi}
Let $x,y$ be two distinct points in $\Hyp^2$. 
\begin{itemize}
\item  The set of geodesics transverse to the hyperbolic segment from $x$ to $y$, where $x$ and $y$ \emph{are} included will be denoted by $T[x,y]$.
\item The geodesic passing through $x$ and $y$ will be denoted by $g_{(xy)}$. It does \emph{not} belong to $T[x,y].$ 
\item The set of geodesics passing through $x$ will be denoted by $G(x)$.
\item The set of geodesics transverse to the open segment $(x,y)$ not containing $x$ and $y$ will be denoted by $T(x,y)$. 
\end{itemize}
\end{defi}
With these notations we have $T[x,y]=T(x,y)\cup G(x)\cup G(y)\setminus g_{(x,y)}$.

\begin{defi}
For every current $\eta$ we define $d_\eta$ for all distinct $(x,y)\in \D\times \D$ by 
$$d_\eta (x,y) := \eta(T[x,y]).$$
We set $d_\eta(x,x) =0$ for every $x\in \D$. 
\end{defi}
\paragraph{Remarks}
\begin{itemize}
\item \emph{A priori} $d_\eta$ needs not to be a distance for every $\eta \in \Curr(S)$,  $d_\eta$ does not necessarily separate points. However, it is clear that $d_\eta$ is positive and we will moreover show that $d_\eta$ satisfies the triangle inequality. 
\item Since $\eta$ is $\G_0-$invariant, so is $d_\eta$. 
\item If $d_\eta$ is a distance, then it is Gromov hyperbolic. Indeed, by Svarc-Milnor lemma, using the fact that $\D/\G_0$ is compact, $(\D,d_\eta)$ is quasi-isometric to $\Hyp^2$. 
\item By definition of intersection, if $x$ is on the axis of an element $\g\in\G_0$, then $d_\eta(x,\g x)=i(\eta,c)$ where $c$ is the closed geodesic of $S$ corresponding to $\g$.
\item $d_\eta$ might not be continuous for the usual topology of $\D$. Let $x\in \D$  be a  point, and $G(x)$ be all the geodesics passing to $x$. Suppose that $\eta(G(x))>0$, for example if $\eta$ contains an atom formed by a closed geodesic, and $x$ belongs to  that geodesic. Then let $x_n$ be a sequence of points whose limit is $x$. Then $\lim d_\eta(x_n,x) \geq \eta(G(x))>0 = d_\eta(x,x)$. 
\end{itemize}

\begin{prop}
For all $\eta \in \Curr(S)$, $d_\eta$ satisfies the triangle inequality. 
\end{prop}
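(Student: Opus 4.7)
The plan is to reduce the triangle inequality to the set-theoretic inclusion
\[
T[x,z] \;\subseteq\; T[x,y] \cup T[y,z]
\]
for all $x,y,z \in \D$. Once this is established, the monotonicity and subadditivity of the measure $\eta$ immediately give
\[
d_\eta(x,z) \;=\; \eta(T[x,z]) \;\leq\; \eta(T[x,y]) + \eta(T[y,z]) \;=\; d_\eta(x,y) + d_\eta(y,z),
\]
and the degenerate cases where two of the three points coincide are trivial since then one side is $0$.

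To prove the inclusion I would decompose $T[x,z]$ via the identity $T[x,z] = T(x,z) \cup G(x) \cup G(z) \setminus \{g_{(xz)}\}$. First I dispose of geodesics that hit a vertex: if $g \in G(x) \setminus \{g_{(xz)}\}$ then either $g \neq g_{(xy)}$, in which case $g \in G(x) \setminus \{g_{(xy)}\} \subseteq T[x,y]$, or $g = g_{(xy)}$, in which case $g$ passes through $y$ as well and $g \neq g_{(yz)}$ (otherwise $g = g_{(xy)} = g_{(yz)} = g_{(xz)}$, contradicting $g \in T[x,z]$), so $g \in G(y) \setminus \{g_{(yz)}\} \subseteq T[y,z]$. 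The case $g \in G(z)$ is symmetric.

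The main geometric step is the case $g \in T(x,z)$: such a $g$ meets the open segment $(x,z)$ transversally at a point $p$, so $x$ and $z$ lie in opposite open half-planes of $\Hyp^2 \setminus g$. The point $y$ then lies in the same half-plane as exactly one of $x, z$, or else lies on $g$. In the first case, say $y$ is on the same side as $x$, the segment $[y,z]$ must cross $g$ transversally; it cannot cross at $y$ (since $y \notin g$) nor at $z$ (since $z \notin g$), hence $g \in T(y,z)$, and one checks $g \neq g_{(yz)}$ because $g_{(yz)} \cap [x,z] = \{z\}$ in the non-collinear situation while $p \in (x,z)$. The other side is symmetric. If $y \in g$, then $g \in G(y)$, and an argument as in the vertex case shows $g$ is distinct from at least one of $g_{(xy)}$, $g_{(yz)}$.

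I expect the only real obstacle to be the bookkeeping of degenerate configurations: when $x,y,z$ are collinear on the same geodesic, the three sets $g_{(xy)}, g_{(yz)}, g_{(xz)}$ coincide and one must carefully check which of the ``excluded'' geodesics are excluded on both sides of the inclusion; and when $g$ passes through a vertex one must verify $g$ is not accidentally equal to the forbidden geodesic of one of the smaller segments. Both are routine once the non-degenerate triangle argument above is in place, so the proposition reduces to combining the two-side crossing observation with countable subadditivity of $\eta$.
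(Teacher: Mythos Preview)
Your proposal is correct and follows essentially the same approach as the paper: both establish the set-theoretic inclusion $T[x,z]\subset T[x,y]\cup T[y,z]$ (up to relabeling of the middle point) and then apply subadditivity of $\eta$. The paper phrases the key geometric step via the Jordan curve formed by the triangle, whereas you use the equivalent half-plane separation argument, and your treatment of the vertex/collinear degeneracies is somewhat more explicit than the paper's.
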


\begin{proof}
Let $x,y,z\in \D.$  If $g\in T(x,y)$ then since the triangle $x,y,z$ describes a Jordan curve, one of the intersection between $g$ and the triangle is on $[x,y]$ the other must be on the other segments (two distinct geodesics can intersect only once). Hence $g\in T[x,z]\cup T[z,y]$.\\
Now if $g\in G(x)$, then $g\in T[x,z]$ unless $g=g_{(x,z)}$. In the latter case, $g\in T[z,y]$. \\
 In other words, $T[x,y]\subset  T[x,z]\cup T[z,y]$.
Hence $\eta (T[x,y])\leq  \eta( T[x,z])+\eta (T[z,y])$, this is by definition equivalent to the triangle inequality. 
\end{proof}

\begin{lemme}\label{lem - i < d}
Let $\eta \in \Curr(S)$. Let $c\in \C$ and $\g$ be a representative of $c$. Then for all $x\in \Hyp^2$ we have
$$i(\eta,c) \leq d_\eta(\g x ,x),$$ 
equality occurs for $x\in A_\g$ (but not necessarily only on $A_\g$.)
\end{lemme}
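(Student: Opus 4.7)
My plan is to leverage the $\g$-invariance of $\eta$ together with a piling-up argument along the $\g$-orbit of $[x,\g x]$. Recall from the introduction that $i(\eta,c) = \eta(T[y,\g y))$ for any $y$ on the axis $A := A_\g$. The key geometric object is the $\g$-invariant bi-infinite piecewise geodesic curve
$$P_x := \bigcup_{n\in\Z}[\g^n x,\g^{n+1}x].$$
Since $\g^n x$ tends to the attracting (resp.\ repelling) fixed point of $\g$ as $n\to\pm\infty$, $P_x$ is a proper continuous curve in $\overline{\Hyp^2}$ sharing its two ideal endpoints with $A$. Writing $T_A$ for the set of geodesics transverse to $A$, any $g\in T_A$ has its endpoints in the two components of $\partial\Hyp^2$ cut out by the fixed points of $\g$, and hence must cross $P_x$ in at least one point by continuity.

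Define $N(g) := \sum_{n\in\Z}\mathbf{1}_{T[\g^n x,\g^{n+1}x]}(g)$, so $N\geq 1$ on $T_A$. Choose the fundamental domain $\mathcal{F} = T[y,\g y)$ with $y\in A$ for the $\g$-action on $T_A$, so that $\eta(\mathcal{F}) = i(\eta,c)$. Integrating,
$$i(\eta,c) = \eta(\mathcal{F}) \leq \int_\mathcal{F} N(g)\,d\eta(g) = \sum_{n\in\Z}\eta\bigl(\mathcal{F}\cap T[\g^n x,\g^{n+1}x]\bigr).$$
Using the $\g$-invariance of $\eta$ and the identity $\g^{-n}\bigl(T[\g^n x,\g^{n+1}x]\bigr) = T[x,\g x]$, each summand rewrites as $\eta(\g^{-n}\mathcal{F}\cap T[x,\g x])$. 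Since the translates $\{\g^{-n}\mathcal{F}\}_{n\in\Z}$ partition $T_A$, the sum telescopes to $\eta(T_A\cap T[x,\g x]) \leq \eta(T[x,\g x]) = d_\eta(\g x,x)$, giving the inequality.

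When $x\in A$ the polygonal curve $P_x$ coincides with $A$ itself, so $N\equiv 1$ on $T_A$; moreover $[x,\g x]\subset A$ forces $T[x,\g x]\subset T_A$, making both inequalities above into equalities up to the $\eta$-mass of the pencil $G(\g x)\cap T_A$ (null for generic currents). I expect the main subtlety to be the definition and additivity of $N$ at the vertices $\g^n x$: a geodesic through such a vertex belongs simultaneously to two consecutive sets $T[\g^{n-1}x,\g^n x]$ and $T[\g^n x,\g^{n+1}x]$, but the collection of such geodesics has codimension one in the space of geodesics and is absorbed harmlessly into the inequality; it moreover accounts exactly for the boundary discrepancy in the equality case, which is why the statement only asserts equality and not strict equality "modulo null sets".
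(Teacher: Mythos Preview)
Your argument is correct and follows a genuinely different route from the paper's. The paper first establishes the inequality when $\eta$ is (the current associated to) a closed curve---where it reduces to the classical fact that geometric intersection number is realised by the geodesic representative in a free homotopy class---and then passes to an arbitrary current by approximating with closed curves (Bonahon's density) and invoking upper semicontinuity of $\eta_k(T[\g x,x])$ under weak convergence.

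You instead work directly with a general $\eta$: you build the $\g$-invariant broken geodesic $P_x$, observe via a Jordan-curve argument in $\overline{\Hyp^2}$ that every geodesic transverse to $A_\g$ must cross $P_x$, and then unfold the resulting sum using the $\g$-invariance of $\eta$ and the tiling of $T_{A_\g}$ by the translates $\g^{-n}\mathcal{F}$. This is more self-contained: it needs neither the density of closed curves in $\Curr(S)$ nor any limiting step, and it sidesteps a small technicality in the approximation approach (the set $T[\g x,x]$ is not closed in the space of geodesics, since $g_{(x,\g x)}$ lies in its closure, so the $\limsup$ inequality strictly speaking only controls $\eta(\overline{T[\g x,x]})$). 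Conceptually, your proof makes transparent \emph{why} the inequality holds---the piecewise path $P_x$ shares its ideal endpoints with $A_\g$ and is therefore crossed at least as often---whereas the paper packages this geometry into the closed-curve case and transports it by continuity. Your remarks on the equality case, flagging the possible contribution of the pencil $G(\g x)$, are also slightly more careful than the paper's treatment.
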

\begin{proof}
Suppose first that the current $\eta$ is a closed curve: $\eta\in \C$, then $i(\eta,c)$ is the geometric intersection of $\eta$ and $c$. By definition, $d_\eta(x,\g x)$ is the number of intersections of the geodesic arc $(x,\g x)/\G$ and $\eta$ on the quotient surface  $\Hyp^2/\G$. Then it is minimal when $c$ is the geodesic representative in its homotopy class. The quotient of the geodesic between $x$ and $\g x$ is a closed curve isotopic to $c$, its geometric intersection is always larger than the intersection of $\eta$ and the  closed geodesic in its isotopy class. By definition the equality occurs on the axis. 

Let $\eta$  be any current and let $\eta_k$ a sequence of closed curves, seen as currents, converging to $\eta$. We have 
$$i(\eta_k,c) \leq  d_\eta(\g x, x)=\eta_k(T[\g x, x])$$
Taking the $\limsup$ we get 
$$i(\eta,c)=\limsup i(\eta_k,c)\leq \limsup \eta_k(T[\g x, x])\leq \eta(T[\g x, x])=d_\eta(\g x, x).$$
%Since $\C$ is dense in $\Curr(S)$ we conclude by continuity. 
\end{proof}

\subsection{Filling currents}
In general, the set $\{c\in \C \, |\, i(\eta,c) \leq R\}$ might be infinite that why we restrict ourselves to \emph{filling} currents, for which this invariant is finite. We use the following definition, inspired by the example given after \cite[Proposition 4]{bonahon1988geometry} :
\begin{defi}\label{def filling}
A current $\eta \in \Curr(S)$ is said to be \emph{filling}, if every geodesic of $\Hyp^2$ transversely meets another geodesic which is in the support of $\eta$. 
 The set of filling currents will be denoted by $\F$. 
\end{defi} 
\paragraph{Remarks} The property of being filling is not equivalent to the property of having strictly positive intersection with all closed geodesics. Indeed a lamination $\Lam$ is never filling since its auto-intersection is 0, but if it is maximal $i(c,\Lam)>0 $ for all $c\in \C$. However, we will show in Proposition \ref{pr F filling iff i(F,eta)>0}, that a current is filling if it has strictly positive intersection with \emph{all} currents.

Recall this compactness theorem of F. Bonahon \cite[Proposition 4]{bonahon1988geometry} 
\begin{theorem}
For every $\eta \in \F$ the set 
$$\{\mu\in \Curr(S) \, |\, i(\eta,\mu) \leq 1\},$$
is compact.
\end{theorem}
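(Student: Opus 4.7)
My plan is to establish the theorem by proving that $K_\eta := \{\mu \in \Curr(S) \mid i(\eta, \mu) \leq 1\}$ is both closed and weak-$*$ precompact in $\Curr(S)$. Closedness is immediate from the bilinearity and continuity of $i$: the set $K_\eta$ is the preimage of $[0,1]$ under the continuous map $\mu \mapsto i(\eta, \mu)$.

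For precompactness I would appeal to Banach--Alaoglu: since elements of $\Curr(S)$ are $\G_0$-invariant Radon measures on the space $\mathcal{G}$ of geodesics of $\Hyp^2$, precompactness in the weak-$*$ topology follows once I exhibit a local uniform mass bound --- for every compact $K \subset \mathcal{G}$, a constant $C_K > 0$ with $\mu(K) \leq C_K$ uniformly for $\mu \in K_\eta$. The filling hypothesis will supply this bound.

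The main step is a local estimate. For a given geodesic $g_0 \in \mathcal{G}$, the filling hypothesis produces a geodesic $h_0 \in \supp(\eta)$ transverse to $g_0$. Since transversality is an open condition on $\mathcal{G} \times \mathcal{G}$, there exist open neighbourhoods $U \ni g_0$ and $V \ni h_0$ making every pair in $U \times V$ transverse. As $h_0 \in \supp(\eta)$ and $\eta$ is Radon, I may shrink $V$ so that $0 < \eta(V) < \infty$, and shrink further so that $U \times V$ embeds injectively into a fundamental domain for $\G_0$ on the space $G^t$ of transverse pairs. This yields
\[
\eta(V) \cdot \mu(U) = (\eta \otimes \mu)(U \times V) \leq i(\eta, \mu) \leq 1,
\]
hence $\mu(U) \leq 1/\eta(V)$. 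Covering $K$ by finitely many such $U_1, \ldots, U_N$ with associated $V_1,\ldots,V_N$ and summing then delivers $\mu(K) \leq \sum_i 1/\eta(V_i) =: C_K$, independent of $\mu \in K_\eta$.

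The hard part is the last shrinking step, which requires that $\G_0$ act properly discontinuously on $G^t$. I would derive this from cocompactness of $\G_0$ on $\Hyp^2$ together with the observation that a pair of transverse geodesics of $\Hyp^2$ determines a unique intersection point --- so the $\G_0$-action on $G^t$ factors through its properly discontinuous action on $\Hyp^2$, and small enough products $U \times V$ avoid their nontrivial translates. If the strict injection should fail in a particular configuration, the central inequality can still be rescued up to a bounded overlap constant counting $\G_0$-translates, and the argument closes without modification.
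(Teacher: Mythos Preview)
The paper does not supply its own proof of this statement: it is quoted as Bonahon's compactness theorem \cite[Proposition~4]{bonahon1988geometry} and used as a black box. So there is no in-paper argument to compare your attempt against.

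That said, your proposal is correct and is essentially Bonahon's original argument: closedness from continuity of $i$, and precompactness via a uniform local mass bound obtained from the filling hypothesis through the product estimate $\eta(V)\,\mu(U)\le i(\eta,\mu)$. One small sharpening: the proper discontinuity you need on $G^t$ follows directly from the $\G_0$-equivariant continuous map $G^t\to\Hyp^2$ sending a transverse pair to its unique intersection point. If $U\times V$ is chosen small enough that all these intersection points lie in a single fundamental domain for $\G_0$ on $\Hyp^2$, then the $\G_0$-translates of $U\times V$ are pairwise disjoint, which is exactly the injectivity into $G^t/\G_0$ you want; your fallback to a bounded-overlap constant is then unnecessary.
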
 
It has a nice corollary: 
\begin{corollaire}\label{cor - inequality between length}
For every filling current $\eta \in \F$, there exists $K>0$ such that for all $c\in \C$ we have : 
$$\frac{1}{K} i(\eta,c)\leq \ell(c) \leq K i(\eta,c).$$
\end{corollaire}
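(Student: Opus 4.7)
The plan is to compare $i(\eta,\cdot)$ with a reference intersection function whose values on closed curves are already known, namely $i(L_0,\cdot)$, where $L_0$ is the Liouville current of the fixed hyperbolic metric on $S=\Hyp^2/\G_0$. Two facts about $L_0$ are crucial: it is filling (its support is the full space of geodesics of $\Hyp^2$), and it satisfies $i(L_0,c)=\ell(c)$ for every $c\in\C$ by the property of Liouville currents recalled in the introduction. The tool doing the real work is Bonahon's compactness theorem stated just above, applied once to $\eta$ and once to $L_0$.

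For the inequality $\ell(c)\leq K\cdot i(\eta,c)$, I would consider the compact set $K_\eta:=\{\mu\in\Curr(S):i(\eta,\mu)\leq 1\}$ and note that $\mu\mapsto i(L_0,\mu)$, being continuous, attains a finite maximum $M$ on $K_\eta$. For any $c\in\C$, bilinearity of $i$ shows that the normalised current $\mu_c:=c/i(\eta,c)$ lies in $K_\eta$, whence $\ell(c)/i(\eta,c)=i(L_0,\mu_c)\leq M$. The opposite inequality $i(\eta,c)\leq K\cdot\ell(c)$ is obtained by the symmetric argument with the roles of $\eta$ and $L_0$ exchanged: Bonahon's theorem applied to $L_0$ gives compactness of $K_{L_0}$, and continuity of $\mu\mapsto i(\eta,\mu)$ yields a finite maximum $M'$ on it; applying it to $c/\ell(c)$ gives the bound. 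Setting $K:=\max(M,M')$ concludes.

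What I would take care to verify first, and what constitutes the only delicate point, is the tacit assumption $i(\eta,c)>0$ that makes the normalisation $\mu_c$ meaningful. This follows from the filling hypothesis: choosing a representative $\g\in\G_0$ of $c$ and a point $x\in A_\g$, Definition~\ref{def filling} provides a geodesic in $\supp\eta$ transversely meeting $A_\g$; the $\G_0$-invariance of $\eta$ then forces positive $\eta$-mass inside $T[x,\g x)$, and the intersection formula recalled in the introduction gives $i(\eta,c)=\eta(T[x,\g x))>0$. Everything else is a routine continuity-and-compactness argument; the main conceptual input that is not proved here is Bonahon's compactness theorem itself, which is what makes the whole strategy work.
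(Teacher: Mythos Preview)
Your proof is correct and follows essentially the same approach as the paper: both use Bonahon's compactness theorem to bound the continuous function $\mu\mapsto i(L_0,\mu)$ on the compact set $\{\mu:i(\eta,\mu)\leq 1\}$ and then projectivize. You are in fact more thorough than the paper, which only spells out the bound $\ell(c)\leq K\,i(\eta,c)$ and leaves the symmetric application to the filling current $L_0$ (and the verification that $i(\eta,c)>0$) implicit.
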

\begin{proof}
Recall that $\ell(c) = i(L,c)$ where $L$ is the Liouville current associated to the hyperbolic metric on $S$. The function $c\tv i(L,c)$ is a continuous function on $\Curr(S)$, hence it is bounded on $\{c\in \Curr(S) \, |\, i(\eta,c) = 1\}$.
By projectivization, it implies that the function $\frac{i(L,c)}{i(\eta,c)}$ is bounded on $\Curr(S)$, so in particular on $\C$. 
\end{proof}
 For any filling current, the set $\{c\in \C \, |\, i(\eta,c) \leq R\}$ is hence finite for all $R>0$ by the previous compactness remark. Therefore for all $\eta\in \F$, $\delta_\eta^g<+\infty$ 
 
 %This allows to make the following definition: 
%\begin{defi}
%For every filling current $\eta\in \F$,  we define its \emph{geodesic critical exponent }by 
%$$\delta_\eta^g :=\limsup_{R\tv +\infty} \frac{1}{R}\Card \{c\in \C \, |\, i(\eta,c) \leq R\}.$$
%\end{defi}

We are going to prove that if $\F$ is filling then $d_\eta$ is proper.

%The main object of this note is the study of $d_\eta$ and we will show the following 
%\begin{theorem}\label{Main theorem}
%For every filling currents $\eta \in \F$ :
%$$\delta_\eta = \delta_\eta^g,$$ 
%and define a continuous function on $\F$. 
%\end{theorem}

%\section{Distances $d_\eta$}

\begin{prop}\label{pr - properness}
Let $x\in \Hyp^2$. Let $\eta\in \Curr(S)$ and for $R\geq 0$, we let  $B_\eta(x,0):=\{y\in \Hyp^2 \, | \, d_\eta(x,y) =0\},$ be the ball radius R, then the following are equivalent: 
\begin{enumerate}
\item $\eta\in \F.$
\item $\forall x\in \D, \,  B_\eta(x,0)\text{ is compact}.$
\item $\forall x\in \D, \, \text{and }\forall R\geq 0, \,  B_\eta(x,R)\text{ is compact}$ ie. $d_\eta$ is proper. 
\end{enumerate}

\end{prop}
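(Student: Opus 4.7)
The plan is to prove the cycle of implications $(3)\Rightarrow(2)\Rightarrow(1)\Rightarrow(3)$. The first implication is immediate by taking $R=0$.

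For $(2)\Rightarrow(1)$, I argue by contrapositive. If $\eta\notin\F$, there is a geodesic $g_0$ of $\Hyp^2$ not transversely met by any element of $\supp(\eta)$. Fix $x\in g_0$; for any $y\in g_0\setminus\{x\}$ one has $g_{(xy)}=g_0$, and the decomposition $T[x,y]=T(x,y)\cup(G(x)\setminus\{g_0\})\cup(G(y)\setminus\{g_0\})$ recorded at the start of the section displays every geodesic of $T[x,y]$ as crossing $g_0$ transversely (at an interior point of $(x,y)$, at $x$, or at $y$). By the choice of $g_0$ none of these lies in $\supp(\eta)$, so $\eta(T[x,y])=0$ and $y\in B_\eta(x,0)$. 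Hence $g_0\subset B_\eta(x,0)$, which is therefore unbounded in $\Hyp^2$ and not compact.

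The substantive content is $(1)\Rightarrow(3)$. Assume $\eta\in\F$ and, for contradiction, let $(y_n)\subset B_\eta(x,R)$ with $L_n:=d_{\Hyp^2}(x,y_n)\to+\infty$. Since $\Hyp^2/\G_0$ is a compact hyperbolic surface, the Anosov closing lemma for its geodesic flow provides, for all sufficiently large $n$, a hyperbolic element $\g_n\in\G_0$ whose axis shadows $[x,y_n]$ within a universal constant $D$, with $d_{\Hyp^2}(\g_n x,y_n)\le D$ and $|\ell(c_n)-L_n|\le D$, where $c_n$ is the closed geodesic represented by $\g_n$. In particular $\ell(c_n)\to+\infty$. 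A uniform bound $d_\eta(z,z')\le M$ holds whenever $d_{\Hyp^2}(z,z')\le D$: the $\G_0$-invariance of $d_\eta$ reduces the question to $z$ in a fixed compact fundamental domain, and the set of geodesics of $\Hyp^2$ meeting a bounded region is compact in the geodesic space, hence of finite $\eta$-mass by the Radon property.

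Combining these ingredients with the triangle inequality, Lemma~\ref{lem - i < d}, and Corollary~\ref{cor - inequality between length} applied to $c_n$,
$$R\ge d_\eta(x,y_n)\ge d_\eta(x,\g_n x)-M\ge i(\eta,c_n)-M\ge \frac{\ell(c_n)}{K}-M\longrightarrow +\infty,$$
contradicting $R<+\infty$. Hence $B_\eta(x,R)$ is hyperbolically bounded and contained in a compact subset of $\Hyp^2$. The main obstacle is precisely the closing-type shadowing: simply picking $\g_n$ with $\g_n x\approx y_n$ by cocompactness of $\G_0$ gives no control on $\ell(c_n)$, which can be much smaller than $d_{\Hyp^2}(x,\g_n x)$, rendering Corollary~\ref{cor - inequality between length} ineffective; one really needs the axis of $\g_n$ to pass near $x$ as well.
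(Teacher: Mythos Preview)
Your argument is correct, and for $(3)\Rightarrow(2)$ and $(2)\Rightarrow(1)$ it coincides with the paper's. For $(1)\Rightarrow(3)$ you take a genuinely different route. The paper argues directly: on $T^1S$ it defines the first transverse hitting time $r(v):=\inf\{t : \pi\phi_t(v)\pitchfork\supp(\eta)\neq\emptyset\}$, observes that $r$ is finite by the filling hypothesis and upper semi-continuous, hence bounded by some $C$ on the compact $T^1S$; this yields a uniform $\epsilon>0$ with $d_\eta(x,y)\geq\epsilon$ whenever $d_{\Hyp^2}(x,y)\geq C+1$, and iteration gives $d_\eta\geq n\epsilon$ on hyperbolic segments of length $\geq n(C+1)$. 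Your approach instead approximates long segments by axes via a closing-type argument and then feeds the resulting closed geodesics into Lemma~\ref{lem - i < d} and Corollary~\ref{cor - inequality between length}. This is valid but less self-contained: it imports the specification/shadowing machinery for Anosov flows and, through Corollary~\ref{cor - inequality between length}, Bonahon's compactness theorem, whereas the paper's hitting-time argument is elementary and uses only the definition of filling. One terminological point: what you invoke is really the specification property (shadowing an arbitrary orbit segment by a periodic orbit of comparable period), not the Anosov closing lemma strictly speaking, which concerns orbits that nearly close up; the distinction matters because an arbitrary segment $[x,y_n]$ need not be a pseudo-orbit that returns near its start.
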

\begin{proof}
$3\Rightarrow 2$ is obvious. \\
$2 \Rightarrow 1$. If $\eta$ is not filling there exists $g$ a geodesic of $\Hyp^2$ which does not intersects the support of $\eta$. Then for all $x,y\in g$ we have $d_\eta(x,y)=0$ by definition of $d_\eta$, hence for all $x\in g$,  $B(x,0)\supset g$ hence it is not compact. \\
$1 \Rightarrow 3$
Suppose now that $\eta$ is filling. For every $v\in T^1S$  (endowed with the fixed hyperbolic metric) we define $r(v) :=\inf\{t\, |\, \pi \phi_t(v) \pitchfork \supp(\eta) \neq \emptyset\},$ to be the first time where the geodesic issued form $v$ intersects the support of the geodesic current transversally. Here $\pi $ is the canonical projection $T^1S \tv S$ and $\phi_t$ is the geodesic flow on $T^1S$. Since $\eta$ is filling $r$ is  finite. \\
As it is easy to see on the universal cover, $r$ is upper semi-continuous \footnote{it is not continuous because of  the transversality condition : for example if the geodesic issued by $v$ is on the support.}, hence $r$ admits an upper bound C on the compact $T^1S$. By $\G$ invariance, $r$ can be lift to a  function on $T^1\Hyp^2$, bounded by $C$. As a consequence, let $[x,y]$ be a segment of $\Hyp^2$ length larger than $C+1$, then the set of transverse geodesics $T[x,y]$ intersects transversally the support of $\eta$, hence $\eta(T[x,y])>0$. By compactness, there is $\epsilon>0$ independent of $x,y$ such that $\eta(T[x,y]) \geq \epsilon.$ In particular, this implies that $B_\eta(x,\epsilon )$ is compact. By induction, any segment of $\Hyp^2$ length larger than $n(C+1)$ is of $d_\eta$ length larger than $\epsilon n$. This implies the compactness of $B_\eta(x,n\epsilon)$ for all $n\in \N$, and concludes the proof. 
\end{proof}

 \begin{defi}
For every geodesic current $\eta \in \Curr(S)$, we define the metric critical exponent by 
$$\delta_\eta :=\limsup_{R\tv +\infty} \frac{1}{R}\Card \{\g \in \G_0 \, |\, d_\eta(\g  o , o ) \leq R\}.$$
\end{defi}
Since $\G_0$ acts properly discontinuously, we deduce from Proposition \ref{pr - properness}: 
\begin{corollaire}
$\delta_\eta$ is finite if and only if $\eta \in \F$. 
\end{corollaire}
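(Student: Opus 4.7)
The plan is to prove both implications separately, leaning on Proposition~\ref{pr - properness} and on the fact that the cocompact Fuchsian group $\G_0$ has exponential orbital growth in $\Hyp^2$.

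\textbf{Forward direction.} Suppose $\eta \in \F$. I would extract the quantitative content of the proof of $1\Rightarrow 3$ in Proposition~\ref{pr - properness}: there exist constants $C,\epsilon>0$ (coming from the upper bound on the first-transverse-intersection function $r$ on $T^1S$) such that every hyperbolic segment of length at least $n(C+1)$ has $d_\eta$-length at least $n\epsilon$. Applied to $[o,\g o]$, this yields
\[
d_{\Hyp^2}(o,\g o)\;\le\;\tfrac{C+1}{\epsilon}\,d_\eta(o,\g o)+(C+1),
\]
so the set $\{\g\in\G_0:d_\eta(o,\g o)\le R\}$ is contained in a hyperbolic ball of radius $O(R)$. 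Since $\G_0$ is cocompact, the number of orbit points in such a ball grows at most exponentially in $R$, so $\delta_\eta$ is finite.

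\textbf{Backward direction.} Suppose $\eta\notin\F$. The proof of $2\Rightarrow 1$ in Proposition~\ref{pr - properness} produces a geodesic $g_0\subset\Hyp^2$ along which $d_\eta$ vanishes identically. Let $D$ be the diameter of $\Hyp^2/\G_0$. Pick $z_k\in g_0$ with $d_{\Hyp^2}(z_k,o)\to\infty$ and, by cocompactness, choose $\g_k\in\G_0$ with $d_{\Hyp^2}(\g_k o,z_k)\le D$; these $\g_k$ are pairwise distinct for large $k$ since $d_{\Hyp^2}(\g_k o,o)\to\infty$. To bound $d_\eta(o,\g_k o)$ uniformly in $k$, fix $y_0\in g_0$, let $y_k\in g_0$ be a nearest point to $\g_k o$, and estimate
\[
d_\eta(o,\g_k o)\;\le\;d_\eta(o,y_k)+d_\eta(y_k,\g_k o)\;\le\;d_\eta(o,y_0)+d_\eta(\g_k^{-1}y_k,o),
\]
using $d_\eta(y_0,y_k)=0$ in the first term and $\G_0$-invariance of $d_\eta$ in the second. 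Since $d_{\Hyp^2}(\g_k^{-1}y_k,o)=d_{\Hyp^2}(y_k,\g_k o)\le D$, local finiteness of the Borel measure $\eta$ on the compact set of geodesics meeting the closed ball $\overline{B}_{\Hyp^2}(o,D)$ gives a uniform bound for $d_\eta(z,o)$ over all such $z$. Hence all $d_\eta(o,\g_k o)$ are bounded by some constant $R_0$, producing an infinite family of group elements in the $d_\eta$-ball of radius $R_0$, and therefore $\delta_\eta=+\infty$.

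\textbf{Main obstacle.} The delicate point is the uniform bound in the backward direction. Naively $y_k$ escapes to infinity along $g_0$, so no global control on $\eta$ is available there; the crucial trick is the $\G_0$-invariance identity $d_\eta(y_k,\g_k o)=d_\eta(\g_k^{-1}y_k,o)$, which pulls the quantity back into a fixed compact neighbourhood of $o$ where only local finiteness of $\eta$ is required. Combined with the identity $d_\eta\equiv 0$ along $g_0$, this makes the estimate go through without any control on $\eta$ far from $o$.
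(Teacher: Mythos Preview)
Your proof is correct and is a careful fleshing-out of what the paper leaves as a one-line remark (``Since $\G_0$ acts properly discontinuously, we deduce from Proposition~\ref{pr - properness}'').  The paper gives no further argument, so there is nothing to compare beyond noting that you supply the missing details.

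Two comments worth recording.  In the forward direction, properness of $d_\eta$ together with proper discontinuity only tells you that $\{\g:d_\eta(\g o,o)\le R\}$ is \emph{finite} for each $R$; to conclude $\delta_\eta<\infty$ one genuinely needs the linear comparison $d_{\Hyp^2}\le A\,d_\eta+B$ that you extract from the proof of $1\Rightarrow 3$ in Proposition~\ref{pr - properness}.  So your quantitative reading is not overkill but actually necessary.

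In the backward direction, your use of $\G_0$-invariance to replace $d_\eta(y_k,\g_k o)$ by $d_\eta(\g_k^{-1}y_k,o)$ is the right move, and the bound you then invoke amounts to the finiteness of $\eta$ on the compact set of geodesics meeting $\overline{B}_{\Hyp^2}(o,D)$; this is exactly the Radon property implicit in the definition of a geodesic current.  An equivalent, slightly shorter packaging: once Proposition~\ref{pr - properness} gives a non-compact $B_\eta(o,R_0)$, pick $w_k\in B_\eta(o,R_0)$ escaping to infinity, choose $\g_k$ with $d_{\Hyp^2}(\g_k o,w_k)\le D$, and bound $d_\eta(w_k,\g_k o)=d_\eta(\g_k^{-1}w_k,o)\le \eta\bigl(\{g:g\cap\overline{B}_{\Hyp^2}(o,D)\neq\emptyset\}\bigr)$.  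This avoids introducing $y_0,y_k$ but is the same idea.
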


Remark that most currents are filling: 
\begin{prop}\label{th - F is open}
$\F$ is a dense subset of  $\Curr(S)$.
\end{prop}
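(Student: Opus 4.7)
The plan is to exhibit, for each $\eta \in \Curr(S)$, an explicit sequence of filling currents converging to $\eta$. The natural idea is to perturb $\eta$ by an arbitrarily small multiple of a distinguished filling current whose support is as large as possible, so that the perturbation inherits the filling property automatically.

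First I would fix a reference current with full support. The Liouville current $L$ of the auxiliary hyperbolic metric on $S = \Hyp^2/\G_0$ is the canonical choice: since the hyperbolic metric has a (unique) geodesic joining every pair of distinct boundary points, $L$ assigns strictly positive mass to every non-empty open subset of the space of geodesics $\SU \times \SU/\Z_2$, so $\supp(L)$ is the whole geodesic space. In particular $L$ itself is filling.

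Next, for any $\eta \in \Curr(S)$ I would set $\eta_n := \eta + \tfrac{1}{n}L$. For any continuous compactly supported test function $f$ on the space of geodesics, one has $\int f\, d\eta_n = \int f\, d\eta + \tfrac{1}{n}\int f\, dL \to \int f\, d\eta$, so $\eta_n \to \eta$ in the weak topology defining $\Curr(S)$. Moreover $\eta_n$ is a positive $\G_0$-invariant Borel measure on the geodesic space, so $\eta_n \in \Curr(S)$.

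It remains to see that each $\eta_n$ lies in $\F$. Since $\supp(\eta_n) \supseteq \supp(\tfrac{1}{n}L) = \supp(L)$ is the entire space of geodesics, given any geodesic $g$ of $\Hyp^2$ one may pick any geodesic $g'$ transverse to $g$, and every open neighbourhood of $g'$ meets $\supp(\eta_n)$; hence $\eta_n$ admits a geodesic in its support that crosses $g$ transversely, verifying Definition \ref{def filling}. Thus $\eta_n \in \F$ and $\eta_n \to \eta$, proving the density. There is no real obstacle here once the right reference current is chosen; the only point that deserves attention is confirming that the topology on $\Curr(S)$ is tested by compactly supported continuous functions on the geodesic space, so that the trivial linear perturbation does converge.
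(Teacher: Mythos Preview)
Your proof is correct and follows essentially the same route as the paper: perturb an arbitrary current by a small positive multiple of a fixed filling current (the paper uses an unspecified $\mu\in\F$, you use the Liouville current $L$), observe that the sum still lies in $\F$ because its support contains that of the filling piece, and let the coefficient tend to $0$ to get weak convergence. Your extra justification that $\supp(L)$ is the whole geodesic space is a fine way to see that $L\in\F$, but the underlying argument is identical to the paper's.
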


\begin{proof}
%We first prove that $\F$ is dense. 
 Let $\eta\in \Curr(S)$ and $\mu\in \F$. Consider the current $\eta_\epsilon=\eta + \epsilon \mu$. 
Clearly $\eta_\epsilon \in \F$ and converges to $\eta$ as $\epsilon\tv 0$. 
\end{proof}

We will use the following characterisation of filling currents, to show that $\F$ is open. We saw that having strictly positive intersection with all closed curves is not a sufficient condition to be filling. We are going to show however that having strictly positive intersection with \emph{all currents} is equivalent of being filling. 
\begin{prop}\label{pr F filling iff i(F,eta)>0}
$F\in \F$ if and only if for all $\eta \in \Curr(S)$, $i(F,\eta)>0$.
\end{prop}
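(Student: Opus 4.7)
The plan is to prove both implications. The statement implicitly concerns nonzero currents $\eta$, since $i(F,0)=0$ trivially. For the forward direction I would take $F$ filling and any nonzero $\eta\in\Curr(S)$, pick a geodesic $g_0\in\supp(\eta)$, and use the filling hypothesis to produce $g_1\in\supp(F)$ meeting $g_0$ transversely. The pair $(g_1,g_0)$ then belongs to the open set $G^t\subset G\times G$ and to $\supp(F\times\eta)$, so a small open neighborhood $U\subset G^t$ has positive $F\times\eta$-mass; decomposing $G^t$ into $\G_0$-translates of a fundamental domain $D$ and using the $\G_0$-invariance of $F\times\eta$, some $\G_0$-translate of $U$ meets $D$ in positive mass, giving $i(F,\eta)>0$.

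For the reverse implication I would argue the contrapositive: starting from a non-filling $F$, I will construct a nonzero current $\eta$ with $i(F,\eta)=0$. By definition of non-filling, there is a geodesic $g$ of $\Hyp^2$ that crosses no geodesic of $\supp(F)$ transversely. My plan is to project a unit tangent vector of $g$ to $\bar v\in T^1S$, consider the closure $\Omega:=\overline{\phi_\R(\bar v)}$ of its geodesic-flow orbit---compact and $\phi_t$-invariant since $T^1S$ is compact---and invoke Krylov--Bogolyubov to obtain a $\phi_t$-invariant probability measure on $\Omega$. Averaging with its pushforward under the flip $v\mapsto-v$ makes it also flip-invariant, and the resulting measure lifts to a $\G_0$-invariant Radon measure on the space of geodesics of $\Hyp^2$, i.e.\ a nonzero current $\eta\in\Curr(S)$.

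To finish I would check that this $\eta$ has zero intersection with $F$. Every geodesic $g'\in\supp(\eta)$ is by construction a limit of translates $\gamma_n g$; since $\supp(F)$ is $\G_0$-invariant, no $\gamma_n g$ crosses $\supp(F)$ transversely, and transverse intersection of two geodesics of $\Hyp^2$ is an open condition in $G\times G$, so $g'$ itself cannot cross any geodesic of $\supp(F)$ transversely. Thus $(\supp(F)\times\supp(\eta))\cap G^t=\emptyset$, which immediately gives $i(F,\eta)=0$. I expect the main obstacle to be this backward direction, and more specifically the step of promoting the single non-transverse geodesic $g$ into a genuine nonzero geodesic current: this requires Krylov--Bogolyubov on a potentially complicated orbit closure in $T^1S$, together with the care needed to check that flip-symmetrization and the passage back to the geodesic space preserve both $\G_0$-invariance and the nontransversality of the support.
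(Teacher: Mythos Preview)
Your proposal is correct, but differs from the paper's proof in both directions, and the differences are worth noting.

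For the forward implication, the paper argues by contrapositive via Bonahon's compactness theorem: if $i(F,\eta)=0$ for some nonzero $\eta$, then the rays $n\eta$ show that $\{\mu : i(F,\mu)\leq 1\}$ is unbounded, hence $F\notin\F$. Your argument is instead direct: pick $g_0\in\supp(\eta)$, use the filling hypothesis to find $g_1\in\supp(F)$ transverse to it, and conclude that $\supp(F\times\eta)$ meets $G^t$, forcing positive mass on a fundamental domain. Your route is more elementary and self-contained, avoiding the appeal to Bonahon's compactness result; the paper's route is shorter once that theorem is in hand.

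For the reverse implication both proofs take the contrapositive and ultimately invoke Krylov--Bogolyubov, but the paper first splits into two cases: if the non-transverse geodesic $g$ is disjoint from $\supp(F)$, it approximates $g$ by a closed geodesic $c$ (using density of closed geodesics and closedness of $\supp(F)$) to get $i(F,c)=0$ immediately; only in the second case, $g\subset\supp(F)$, does it build an invariant measure on the orbit closure $\overline{g}$. Your argument treats both cases uniformly by applying Krylov--Bogolyubov to the orbit closure of $g$ in $T^1S$ and then verifying, via the openness of transversality and the $\G_0$-invariance of $\supp(F)$, that no geodesic in the resulting support crosses $\supp(F)$ transversely. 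The paper's case split buys a more concrete witness (a closed curve) in the generic situation; your unified approach is cleaner and requires no separate density argument. The limiting step you flag as the main obstacle is exactly the one the paper handles with the phrase ``by density,'' and your justification of it is in fact more explicit than the paper's.
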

\begin{proof}
Suppose there is $\eta\in \Curr(S)$, such that $i(F,\eta)=0$. Then for all $n\in \N$, $i(F,n \eta)=0$, hence the set $\{c\in \Curr(S)\, |\, i(F,c)\leq 1\}$, is not compact. According to Bonahon's theorem, \cite[Proposition 4]{bonahon1988geometry}, $F$ is not filling. 

Conversely, suppose that $F$ is not filling, we have to construct a current whose intersection with $F$ is 0. We know there exists a geodesic $g$ which does not intersect $\supp(F)$ transversally. Then either $g\cap \supp(F)=\emptyset$ or $g\subset \supp(F)$. \\
In the first case, since closed geodesics are dense in the set of geodesics and $\supp(F)$ is closed, we can find a closed geodesic $c$ such that $c\cap \supp(F)=\emptyset$. This means that the geodesic current associated to $c$ satisfies $i(F,c)=0$.\\
If $g\subset \supp(F)$, then by density there is no geodesic on the support of $F$ transverse to  the closure of $g$, $\overline{g}$. By Bogolyubov-Krilov theorem, there exists an invariant measure for the geodesic flow whose support is included in $\overline{g}$. Let $\alpha$ the associated geodesic current. There is no pair $(g_1,g_2)$ of transverse geodesics, for which $g_1\in \supp(F)$ and $g_2\in \supp(\alpha)$. By definition of the intersection, it implies that $i(F,\alpha)=0$.
\end{proof}

\begin{corollaire}
$\F$ is open in $\Curr(S)$. 
\end{corollaire}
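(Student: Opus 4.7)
The plan is to prove that the complement of $\F$ is closed in $\Curr(S)$, relying on the characterization just established in Proposition~\ref{pr F filling iff i(F,eta)>0}. So I will take a sequence $(F_n)$ of non-filling currents converging to some $F \in \Curr(S)$ and produce a nonzero $\eta \in \Curr(S)$ with $i(F, \eta) = 0$, which by the proposition will force $F \notin \F$.

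First, for each $n$, Proposition~\ref{pr F filling iff i(F,eta)>0} combined with bilinearity of $i$ gives a nonzero current $\eta_n$ with $i(F_n, \eta_n) = 0$. To keep the sequence $(\eta_n)$ in a compact family, I fix once and for all an auxiliary filling current $\mu \in \F$, for instance the Liouville current of the background hyperbolic metric. Since $\mu$ is filling and $\eta_n \neq 0$, Proposition~\ref{pr F filling iff i(F,eta)>0} yields $i(\mu, \eta_n) > 0$, so by bilinearity I may rescale each $\eta_n$ by a positive scalar to arrange $i(\mu, \eta_n) = 1$, without disturbing $i(F_n, \eta_n) = 0$.

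Now I invoke Bonahon's compactness theorem applied to $\mu$: the set $\{\nu \in \Curr(S) : i(\mu, \nu) \leq 1\}$ is compact, so after extraction $\eta_n \to \eta$ for some $\eta \in \Curr(S)$. Continuity of $i$ gives $i(\mu, \eta) = 1$, hence $\eta \neq 0$, and $i(F, \eta) = \lim_n i(F_n, \eta_n) = 0$. Reapplying Proposition~\ref{pr F filling iff i(F,eta)>0} shows $F \notin \F$, which completes the argument.

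The only subtle point is the normalization step. Without fixing a filling test current $\mu$ and rescaling against $i(\mu, \cdot)$, the witnesses $\eta_n$ could escape to infinity in $\Curr(S)$ and the limit witness $\eta$ would be lost. Choosing $\mu$ filling turns $i(\mu, \cdot)$ into a proper function on $\Curr(S)$ by Bonahon's theorem, and that is exactly what makes the compactness extraction work.
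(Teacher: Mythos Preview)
Your proof is correct and follows essentially the same approach as the paper's: take a sequence of non-filling currents converging to a limit, pick witnesses with zero intersection, normalize and extract a convergent subsequence, and pass to the limit using continuity of $i$ and Proposition~\ref{pr F filling iff i(F,eta)>0}. Your version is in fact more carefully written, since you make the normalization explicit (via a fixed filling current $\mu$ and Bonahon's compactness theorem) and verify that the limiting witness is nonzero, whereas the paper leaves these points implicit.
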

\begin{proof}
Let $\eta_k$ a converging sequence of non-filling geodesic current. We call the limit $\eta_\infty$. There exists $\lambda_k$ such that $i(\eta_k,\lambda_k)=0$. Up to normalisation and subsequence, we can suppose that $\lambda_k$ converges to $\lambda_\infty$. By continuity we have, $i(\eta_\infty,\lambda_\infty)=0$, which implies, according to the previous lemma that $\eta_\infty$ is not filling. 
\end{proof}

 \section{Critical exponents}\label{sec - critical exponent}
The aim of this section is to show the following Theorem:
\begin{theorem}\label{th delta = deltag}
For all $\eta\in \Curr(S)$ we have: $\delta_\eta=\delta_\eta^g. $  It is finite if and only if $\eta$ is filling (cf. Definition \ref{def filling}). And, restricted to filling currents, it defines a continuous function.
\end{theorem}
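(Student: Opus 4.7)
The plan is to establish the three claims separately: finiteness iff filling, the equality $\delta_\eta = \delta^g_\eta$ for filling $\eta$, and continuity on $\F$. Finiteness is essentially already in the paper: Proposition \ref{pr - properness} and its corollary give $\delta_\eta < \infty \Leftrightarrow \eta \in \F$, and the discussion just after Corollary \ref{cor - inequality between length} gives filling $\Rightarrow \delta_\eta^g < \infty$ (the converse follows from Proposition \ref{pr F filling iff i(F,eta)>0}: a non-filling $\eta$ vanishes against some closed curve $c$, hence against all its powers $c^k \in \C$, producing infinitely many homotopy classes with $i(\eta, c^k)=0$).

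For $\delta^g_\eta \leq \delta_\eta$ on filling $\eta$, I would fix a compact fundamental domain $K$ containing $o$ and, for each $c \in \C$, pick a representative $\gamma_c \in \G_0$ whose axis meets $K$. By Lemma \ref{lem - i < d} (equality on the axis), $d_\eta(y, \gamma_c y) = i(\eta, c)$ for any $y \in A_{\gamma_c} \cap K$. The remark that $(\Hyp^2, d_\eta)$ is quasi-isometric to $\Hyp^2$ via Svarc-Milnor makes $d_\eta$ bounded on $K \times K$ by some $M$, so the triangle inequality gives $d_\eta(\gamma_c o, o) \leq i(\eta, c) + 2M$. Distinct conjugacy classes produce distinct $\gamma_c$, so this is an injection $\{c : i(\eta,c) \leq R\} \hookrightarrow \{\gamma : d_\eta(\gamma o, o) \leq R + 2M\}$, yielding $\delta^g_\eta \leq \delta_\eta$.

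For $\delta_\eta \leq \delta^g_\eta$ I would partition the orbit count by conjugacy class: by Lemma \ref{lem - i < d} every $\gamma$ in the orbit lies over some $c$ with $i(\eta, c) \leq R$. For a fixed representative $\gamma_c$, conjugates $\gamma' = \alpha \gamma_c \alpha^{-1}$ satisfy $d_\eta(\gamma' o, o) = d_\eta(\gamma_c x, x)$ with $x = \alpha^{-1} o$, and hyperbolic trigonometry combined with the quasi-isometry $d_\eta \sim d_{\Hyp^2}$ produces the displacement asymptotic $d_\eta(\gamma_c x, x) \geq i(\eta, c) + 2 d_\eta(x, A_{\gamma_c}) - C$. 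Contributing $\alpha^{-1} o$ therefore sit inside a tube of $d_\eta$-width at most $(R - i(\eta, c))/2 + C/2$ around the axis; a lattice count modulo the cyclic centralizer $Z(\gamma_c)$ gives a fiber bound which, summed over $c$, reproduces (up to subexponential weights) the Poincaré series $\sum_c e^{-s i(\eta,c)}$, leading to $\delta_\eta \leq \delta^g_\eta$. The delicate step is calibrating the tube-counting exponent so that the fiber bound does not enlarge the effective critical exponent beyond $\delta^g_\eta$; this is the main technical obstacle, as the quasi-isometry constants could in principle blur the sharpness of the bound.

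For continuity on $\F$, I would fix $\eta_0 \in \F$ and use Proposition \ref{pr F filling iff i(F,eta)>0} (filling is equivalent to strictly positive intersection against all nonzero currents) together with Bonahon's compactness to promote pointwise continuity of intersection into a uniform multiplicative comparison. On the compact set $K_0 := \{\mu \in \Curr(S) : i(\eta_0, \mu) = 1\}$, the continuous map $(\eta, \mu) \mapsto i(\eta, \mu)$ equals $1$ at $\eta = \eta_0$; by compactness, for any $\epsilon > 0$ there is a neighbourhood $U \ni \eta_0$ such that $|i(\eta, \mu) - 1| < \epsilon$ uniformly on $K_0$. Homogeneity of $i$ then gives $(1-\epsilon) i(\eta_0, c) \leq i(\eta, c) \leq (1+\epsilon) i(\eta_0, c)$ for every $c \in \C$ and $\eta \in U$, from which $(1+\epsilon)^{-1} \delta^g_{\eta_0} \leq \delta^g_\eta \leq (1-\epsilon)^{-1} \delta^g_{\eta_0}$. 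Combined with the equality $\delta_\eta = \delta^g_\eta$ already proven, this establishes continuity of $\eta \mapsto \delta_\eta$ on $\F$.
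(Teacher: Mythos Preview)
Your proofs of continuity and of $\delta^g_\eta \leq \delta_\eta$ are essentially the paper's arguments. The paper defines $F(\eta) = \sup_{c \in \Curr_1(S)} i(\eta,c)/i(\eta_\infty,c)$ and deduces continuity from $\delta^g_{\eta_\infty} \min F \leq \delta^g_\eta \leq \delta^g_{\eta_\infty} \max F$, which is your $(1 \pm \epsilon)$ sandwich rephrased; and for the easy inequality it uses exactly your injection $c \mapsto \gamma_c$ with axis meeting a compact fundamental domain.

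For $\delta_\eta \leq \delta^g_\eta$ your approach diverges from the paper's, and the obstacle you flag is real. The displacement estimate $d_\eta(\gamma_c x, x) \geq i(\eta, c) + 2\, d_\eta(x, A_{\gamma_c}) - C$ does \emph{not} follow from a quasi-isometry between $d_\eta$ and $d_{\Hyp^2}$: quasi-isometries introduce multiplicative constants, so at best you get $d_\eta(\gamma_c x, x) \geq \lambda^{-1}\ell(c) + 2\lambda^{-1} d_{\Hyp^2}(x, A_{\gamma_c}) - C$, and converting $\ell(c)$ back to $i(\eta,c)$ costs another multiplicative factor. The resulting tube then has width comparable to $R$ rather than to $R - i(\eta,c)$, the fiber over a single $c$ is already exponential in $R$, and the sum over $\C_\eta(R)$ overshoots. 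The paper sidesteps this entirely via a normalization trick (after Knieper): fix a hyperbolic $r \in \G_0$ with axis through the fundamental domain $N$, choose small neighbourhoods $U,V$ of $r^{\mp}$, and show (i) at least half of $\G_\eta(R)$ satisfies $\gamma U \cap V = \emptyset$, and (ii) for each such $\gamma$ the element $r^n \gamma r^n$ has axis through $N$ and $i(\eta,[r^n\gamma r^n]) \leq R + c$. This injects half of $\G_\eta(R)$ into $B_\eta(R+c) := \{\gamma : d_{\Hyp^2}(A_\gamma,o) \leq D,\ i(\eta,[\gamma]) \leq R+c\}$, whose fibers over $\C_\eta(R+c)$ are bounded \emph{linearly} in $R$ by a direct packing argument on the axis. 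No displacement formula for $d_\eta$ is ever needed.

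One further correction: your claim that a non-filling $\eta$ vanishes on some closed curve is false. The remark after Definition~\ref{def filling} gives the counterexample: a maximal measured lamination $\Lambda$ is not filling (since $i(\Lambda,\Lambda)=0$) yet satisfies $i(\Lambda,c)>0$ for every $c \in \C$. Proposition~\ref{pr F filling iff i(F,eta)>0} only produces a \emph{current} with zero intersection, not a closed curve, so your argument for $\delta^g_\eta = \infty$ in the non-filling case does not work as written.
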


%We now show that it is a open subset. 
%Let $\eta \in \F$. We are going to show there exists $\epsilon >0$ such that  for all $\alpha \in \Curr(S)$, with $i(\alpha,\eta)=1$ and for all $t \in (-\epsilon,\epsilon) $ we have $\eta+t\alpha\in \F$. 
%By compactness, and projectivization, there exists $M>0$ such that for all $c\in \Curr(S)$, $\frac{i(\alpha,c)}{i(\eta,c)}\leq M$. Let $\epsilon < 1/M$. We have for all $t\in (-\epsilon,\epsilon)$ and for all  $c\in \Curr(S)$, we have
%\begin{eqnarray*}
%i(\eta,c)+ t i(\alpha ,c) &>& i(\eta,c)-1/M i (\alpha,c)\\	
%										&>& 0.
%\end{eqnarray*}
%\end{proof}
%Pas fini... Pourquoi ca implique que c'est ouvert... ? Est ce que filling équivaut à intersection avec TOUS les courants >0 (pas seulement les géodésiques fermées. )
%

%\subsection{Proof of Theorem \ref{th delta = deltag}}

%\begin{prop}
%The function $\eta\tv \delta^g_\eta$ is continuous
%\end{prop}
\begin{proof}[\textbf{Proof of continuity} ]
 Let $\eta_\infty$ be a filling current and let $F\, : \, \Curr(S) \tv \R$ be the  function defined by 
$$F(\eta )  :=\sup_{c\in \Curr_1(S)} \frac{i(\eta,c)}{i(\eta_\infty,c)}$$
where $\Curr_1(S)$ is the compact set of currents whose intersection with $\eta_\infty$ is equal to $1$. By the latter compacity, $F$ is continuous and $F(\eta_\infty)=1$. Then, the continuity  of $\eta\tv \delta_\eta^g$ follows from the inequality: 
$$\delta^g_{\eta_\infty} \min F \leq \delta^g_\eta \leq \delta^g_{\eta_\infty} \max F.$$
\end{proof}

We fix once for all $\eta\in \F$. We follow the proof of \cite{Knieper} which is  very flexible. We will prove separately the two inequalities $\delta_\eta^g \leq \delta_\eta$ and then $\delta_\eta \leq \delta^g_\eta$.

We first prove that $\delta_\eta^g \leq \delta_\eta$

\begin{proof}[\textbf{Proof of $\delta_\eta^g \leq \delta_\eta$}]
Let $\G_\eta(R) := \{ \g \in \G \, |\, d_\eta(\g o , o ) \leq R\},$ and   $\C_\eta(R) := \{ c \in \C \, |\, i(\eta,c)\leq R\}.$ Fix also $N$ a compact fundamental domain of $\G$ acting on $\Hyp^2$ containing $o$. Since $N$ is compact the $\eta$ measure of all geodesics intersecting $N$ is bounded, says by $K>0$. 
Let $j : \C \tv \G$ the inclusion map  sending $c$ to $\g\in\G$ whose axis $A_\g$ intersects $N$. Let $p \in A_\g\cap N$.
\begin{eqnarray}
d_\eta(\g o , o )&\leq & d_\eta(\g p , p ) + 2d_\eta(o,p).\\
						  &\leq & i(\eta , c) + 2K
\end{eqnarray}
Hence $j$ restrict to a injection from $\C_\eta(R)$ to $\G_\eta(R+2K)$. In particular 
$$\delta_\eta^g \leq \delta_\eta.$$
\end{proof}
The other inequality is more demanding. First we prove that the projection
$$\pi : \G\tv \C,$$
associating to any element $\g\in \G$  its conjugacy class $[\g]$ induces a map from $\G_\eta(R)$ onto $\C_\eta(R)$.

We look at the subset of $\G_\eta(R)$ whose axis is at bounded distance $D>0$ of $o\in \Hyp^2$. 
$$B_\eta(R) := \{\g \in \G \, | \, d(A_\g,o) \leq D \text{ and } i(\eta, [\g]) \leq R\}.$$
We will first prove that the cardinal of $\pi^{-1}([\g])\cap B_\eta(R)$   is polynomial in $R$. Then we will show that  $B_\eta(R)$ has the same exponential rate as $\G_\eta(R)$. 

By Lemma \ref{lem - i < d},  if $D$ is larger than the diameter of $N$,(for the hyperbolic distance)  then the map $\pi : B_\eta(R) \tv \C_\eta(R)$ is onto. 

\begin{lemme}\label{lem- 1}
There exists $K>0$ such that for all $R>0$ and all $[\g]\in \C_\eta(R),$
$$\Card\left( \pi^{-1} ([\g])\cap B_\eta(R) \right)\leq KR.$$
In particular, $\Card \C_\eta(R) \geq \frac{1}{KR}\Card B_\eta(R).$
\end{lemme}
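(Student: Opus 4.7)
The plan is to parametrize the fiber $\pi^{-1}([\g])$ and count it by a volume argument in $\Hyp^2$. Every element of the conjugacy class $[\g]$ has the form $\alpha \g \alpha^{-1}$ for some $\alpha \in \G$, and two such conjugators produce the same element precisely when they differ by an element of the centralizer of $\g$; in the surface group $\G$ this centralizer is the infinite cyclic group $\langle \g_0 \rangle$ generated by the primitive root $\g_0$ of $\g$. Hence $\pi^{-1}([\g])$ is in natural bijection with $\G/\langle \g_0 \rangle$. Since the axis of $\alpha\g\alpha^{-1}$ is $\alpha\cdot A_\g$, the condition $d(A_{\alpha\g\alpha^{-1}},o)\leq D$ translates into $\alpha^{-1}o \in N_D(A_\g) := \{x\in\Hyp^2 : d(x,A_\g)\leq D\}$, the closed $D$-tubular neighborhood of $A_\g$.

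Consequently, $\Card(\pi^{-1}([\g])\cap B_\eta(R))$ equals the number of cosets $\alpha\langle \g_0\rangle$ admitting a representative with $\alpha^{-1}o\in N_D(A_\g)$; equivalently, the number of $\langle \g_0\rangle$-orbits of $\G$-translates of $o$ that fall inside the tube $N_D(A_\g)$. The cyclic group $\langle \g_0\rangle$ acts on $N_D(A_\g)$ by hyperbolic translation along $A_\g$ of length $\ell(c_0)$, where $c_0$ denotes the primitive closed geodesic underlying $c$, so a fundamental domain for this action is a hyperbolic strip of area $2\sinh(D)\cdot\ell(c_0)$. A standard Svarc-Milnor orbit-counting estimate then bounds the number of $\G$-translates of $o$ lying inside this strip by $C_1\ell(c_0) + C_2$, with $C_1, C_2>0$ depending only on $D$ and on $\vol(\Hyp^2/\G)$, and hence independent of $\g$.

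Combining $\ell(c_0)\leq \ell(c)$ (valid whether $\g$ is primitive or not) with Corollary \ref{cor - inequality between length} yields $\ell(c_0)\leq \ell(c)\leq K_0\, i(\eta,c)\leq K_0 R$. Moreover, the injectivity radius of the compact surface $S$ forces $\ell(c_0)\geq \epsilon>0$, so $R\geq \epsilon/K_0$ whenever the fiber is nonempty, which allows the additive constant $C_2$ to be absorbed into a linear term. This gives $\Card(\pi^{-1}([\g])\cap B_\eta(R))\leq K R$ for a uniform $K>0$. The \emph{in particular} statement is then immediate from the surjectivity of $\pi:B_\eta(R)\to\C_\eta(R)$ proved just above the lemma. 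The most delicate point in this plan is the Svarc-Milnor estimate: one must check that the constants $C_1, C_2$ can genuinely be chosen independently of $\g$, which works because $D$ is fixed and the tube cross-section has a uniform shape, so the orbit-counting reduces to a standard comparison between hyperbolic area and the covolume of $\G$.
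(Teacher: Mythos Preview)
Your argument is correct and shares the paper's overall architecture: both reduce the claim to an inequality of the form $\Card\bigl(\pi^{-1}([\g])\cap B_\eta(R)\bigr)\le K'\,\ell([\g])$ and then invoke Corollary~\ref{cor - inequality between length} to convert $\ell([\g])$ into $i(\eta,[\g])\le R$. The difference is only in how that linear-in-length bound is established. The paper reproduces Knieper's one-dimensional separation argument: it covers the fundamental domain $N$ by finitely many balls of radius $s/4$ (with $s$ the systole), and for each ball center $p$ shows that distinct conjugates $g,g'$ with axis passing within $s/4$ of $p$ yield points $q,q'$ on a fundamental segment $I\subset A_\g$ satisfying $d(q,q')\ge s/2$, whence $\Card F_p\le 2\ell(I)/s$. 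You instead identify the fiber with $\langle\g_0\rangle$-orbits of $\G\cdot o$ inside the tube $N_D(A_\g)$ and bound orbit points in a fundamental strip by an area/packing comparison. Both routes rest on the same systole input; Knieper's is slightly more hands-on and avoids computing tube areas, while yours makes the centralizer bookkeeping and the primitive/non-primitive distinction cleaner. Your absorption of the additive constant via the uniform lower bound on $R$ is fine, and the ``in particular'' clause follows from the surjectivity of $\pi$ exactly as in the paper.
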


\begin{proof}

Fix $[\g] \in \C_\eta(R)$, $\g\in \G$ a representative of $[\g]$ and  $A_\g$ the axis of $\g$. Let $E:= \{ g \in [\g] \, | \, d(A_g,o)\leq D \}$.

For the sake of completeness we reproduce the proof of G. Knieper showing that there exists $K>0$ such that. 
$$\Card (E) \leq K \ell([\g]).$$
Then, the lemma will follow, using the inequality of Corollary \ref{cor - inequality between length} : $\ell([\g]) \leq K' i(\eta , [\g])$.

%	&=& \{\beta \in \G \, | \, g = \beta \g \beta^{-1}, \, \, d_1(\Axe(g_1,o_1)\leq D_1 \}\\
%	&=& \{\beta \in \G \, | \, d_1( \beta_1(\Delta_1),o_1)\leq D_1\} \\
%	&\leq & \{\beta \in \G \, | \,  \beta_1(\Delta_1)\cap N_1 \neq \emptyset \} 
Let $s$ be the length of the smallest geodesic on $S$, and consider $r :=s /4$. For all $p\in \Hyp^2$, we will find an upper bound of 
$$F_{p}= \{ g \in [\g] \, | \, d(A_g,p)\leq r \}.$$
By compactness of $N$,  There exists a finite set of balls $\left(B(p_i,r)\right)_{i\in [1,n]}$ such that $N\subset \cup_i B(p_i,r)$. Therefore  $\Card (E) \leq n\max_{i}\Card(F_{p})$.It is sufficient to find a polynomial upper bound for $F_p$ to prove the lemma. 
%\lbrace \beta \in \G \lvert \beta_1 (\Delta_1) \cap B(p_1,r_1) \neq \emptyset\rbrace.$$
%Il suffira ensuite d'utiliser la compacité de $N_1$ afin de majorer $E$.

Let  $g$ and $g'$ be two distinct elements of $F_{p}$.  By definition, there exists $\beta$ and $\beta'$ such that $g= \beta \g \beta^{-1}$ and $g' = \beta' \g \beta'^{-1}$. We have $A_g=\beta(A_\g)$ et   $A_{g'} =\beta'(A_\g)$. Fix a fundamental domain $I$ for the action of $\g$ on $A_\g$. The length of $I$ is equal to $\ell(\g)$. Since $g,g'\in F_{p}$, by definition, there exists $q$ and $q'$ in $I$ and $n,n' \in \Z$ such that 
$$\beta\g^n q \in B(p,r) \quad \text{ and } \quad \beta'\g^{n'} q' \in B(p,r). $$
Also $q,q'\in I$, implies 
\begin{eqnarray}\label{eq - interne lemme 1}
d(q,q')\leq \ell([\g]).
\end{eqnarray}
We are going to find a lower bound for $d(q,q')$, which will therefore induce an upper bound for $F_{p}$. We call $\alpha:=\beta \g^n $ and $\alpha':=\beta'   \g^{n'}$. Since $g\neq g'$, we have $\alpha\neq \alpha'$.  By definition of $s$, we have
\begin{eqnarray*}
s    &\leq & d(\alpha'^{-1} \alpha q ,q) = d(\alpha q ,\alpha'q) \\
      &\leq & d(\alpha q,\alpha'q') + d(\alpha'q,\alpha'q') \\
      %&\leq & d(eq_1,e'q'_1) + d(q_1,q'_1) \\ 
      &\leq & 2r +d(q,q') = s/2 +d(q,q').
\end{eqnarray*}
In particular the points $q$ and $q'$ satisfies 
 \begin{eqnarray}\label{eq2 - interne lem 1}
d(q,q') > s/2.
\end{eqnarray} 
Equations (\ref{eq - interne lemme 1}) and (\ref{eq2 - interne lem 1}) shows that $\Card F_{p} \leq \frac{2\ell(I)}{s}$. Finally, there exists $K>0$ such that 
\begin{eqnarray}\label{eq 1}
\Card E \leq K \ell([\g]).
\end{eqnarray} 

\end{proof}

We are going to show that the cardinal of $B_\eta(R) $ has the same exponential growth as $\G_\eta(R)$.
We will show this in two times. We choose two open sets $U,V$ of $\SU=\partial \Hyp^2$, such that every geodesics whose endpoints lie in $U$ and $V$ intersect $N$. First we show that if an element $\g\in \G_\eta(R)$ satisfies $\g U \cap V\neq \emptyset $ then we can bring back its axis in $N$ without changing to much its translation length. Then we show that the elements of $\G_\eta(R)$ which does not satisfy this property are very few, this allows to find an upper bound for $B_\eta(R)$. 

The two following lemmas are very classical and can be found in \cite{}: 
\begin{lemme}\label{lemme géométrique 1}
There exists  $U,V\subset \SU$ such that for any $u \in U, v \in V$, we have  $d ((u,v), o) \leq D$. Moreover every subset of $U$ and $V$ have the same property. 
\end{lemme}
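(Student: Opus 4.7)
The plan is to prove this by a direct continuity argument on the boundary circle $\SU$. First, I would pick any point $u_0 \in \SU$ and choose $v_0 \in \SU$ to be its ``antipode'' relative to $o$, i.e.\ the unique point such that the complete geodesic from $u_0$ to $v_0$ passes through $o$. For this choice one has $d((u_0,v_0),o) = 0$, which is already much less than $D$.

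Second, I would verify that the map
$$\Phi : (\SU \times \SU) \setminus \Delta \longrightarrow \R, \qquad (u,v) \longmapsto d\bigl((u,v),o\bigr)$$
is continuous, where $\Delta$ is the diagonal. In the disk model $\D$, a bi-infinite geodesic is a Euclidean circular arc orthogonal to $\SU$, and its two endpoints determine it uniquely; the map from the pair of endpoints to the underlying set (in the Hausdorff topology on compact subsets of $\BHyp$, say) is continuous, and hence so is the hyperbolic distance from the fixed point $o$ to this arc.

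Third, since $\Phi(u_0,v_0) = 0 < D$, continuity produces open neighbourhoods $U \ni u_0$ and $V \ni v_0$ such that $\Phi(u,v) \leq D$ for every $(u,v) \in U \times V$. These $U$ and $V$ are precisely the desired sets. The ``moreover'' clause is then immediate: if $U' \subseteq U$ and $V' \subseteq V$, then $U' \times V' \subseteq U \times V$ and the same bound on $\Phi$ transfers verbatim.

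The main (and really only) obstacle is justifying the continuity of $\Phi$ at the chosen pair $(u_0,v_0)$, but this is a standard fact about the disk model and requires no work beyond writing down the arc explicitly in Euclidean coordinates. The rest of the argument is essentially a one-line application of continuity.
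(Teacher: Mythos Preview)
Your argument is correct. The paper does not actually prove this lemma; it introduces it (together with the following one on the dynamics of $r^{\pm n}$) as ``very classical'' and defers to the literature via an empty citation, giving no argument of its own. Your continuity proof is precisely the standard justification one would supply.

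One contextual remark: in the paper's subsequent use of this lemma, the sets $U$ and $V$ are taken to be neighbourhoods of the fixed points $r^-$ and $r^+$ of a chosen hyperbolic element $r \in \G$ whose axis $A_r$ meets the fundamental domain $N$. Your proof adapts at no cost: instead of an arbitrary antipodal pair, take $u_0 = r^-$ and $v_0 = r^+$; since $A_r \cap N \neq \emptyset$, $o \in N$, and $D$ is chosen strictly larger than the diameter of $N$, one has $\Phi(u_0,v_0) = d(A_r,o) < D$ already, and the continuity step goes through unchanged.
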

We fix once for all an element  $r\in \G$ whose axis $A_r$ intersects $N$. This element will be used to normalize elements of $\G$ in order to make their axis intersect $N$. We denote by $r^\pm$ the repulsive and attractive fixed point of $r$. 
\begin{lemme}\label{lem topo 1}
For all neighbourhood  $U\subset \partial \Hyp^2$ of  $r^+$ and $V\subset \partial \Hyp^2$ of $r^-$ there exists $n\in \N$ such that 
$$r^n (\partial \Hyp^2\setminus V) \subset  U,$$
$$r^{-n} (\partial \Hyp^2\setminus U) \subset  V.$$
\end{lemme}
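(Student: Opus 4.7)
The plan is to invoke the standard north–south dynamics of a loxodromic (hyperbolic) isometry on its boundary circle. Since $\G_0$ is a cocompact discrete subgroup of $\Isom_0(\Hyp^2)$, every non-trivial element, and in particular the fixed $r\in \G_0$, is a hyperbolic isometry, with exactly two fixed points on $\partial \Hyp^2$: the attractive $r^+$ and the repulsive $r^-$. Both $\partial \Hyp^2\setminus V$ and $\partial \Hyp^2 \setminus U$ are closed subsets of the compact space $\SU$, hence compact; moreover, since $V$ is an open neighbourhood of $r^-$ and $U$ an open neighbourhood of $r^+$, these compact sets avoid $r^-$ and $r^+$ respectively.

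The core dynamical fact I would establish is: for every neighbourhood $W$ of $r^+$ and every compact $K \subset \partial\Hyp^2 \setminus \{r^-\}$, there exists $n_0\in \N$ such that $r^n(K) \subset W$ for all $n\ge n_0$. To see this, conjugate $r$ in the upper half-plane model to the dilation $z\mapsto \lambda z$ with $\lambda>1$, so that $r^+$ corresponds to $\infty$ and $r^-$ to $0$. On the boundary $\R\cup\{\infty\}$, the map $r^n$ becomes $x \mapsto \lambda^n x$; a compact subset of $\R\cup\{\infty\}$ avoiding $0$ is bounded below in absolute value by some $\epsilon>0$, so its image under $r^n$ lies in any preassigned neighbourhood of $\infty$ as soon as $\lambda^n \epsilon$ is large enough. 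This is exactly the required uniform convergence.

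Applying this fact with $K = \partial\Hyp^2\setminus V$ and $W = U$ yields an integer $n_1$ such that $r^n(\partial \Hyp^2 \setminus V)\subset U$ for all $n\ge n_1$. Because $r^{-1}$ is also a hyperbolic isometry with attractive fixed point $r^-$ and repulsive fixed point $r^+$, the same statement applied to $r^{-1}$ with $K=\partial \Hyp^2\setminus U$ and $W=V$ yields $n_2$ with $r^{-n}(\partial \Hyp^2\setminus U)\subset V$ for all $n\ge n_2$. Setting $n := \max(n_1,n_2)$ gives both inclusions simultaneously and proves the lemma. There is no real obstacle here; the only point that requires any care is the uniformity of convergence on compact sets away from the repulsive fixed point, and that is immediate from the dilation model used above.
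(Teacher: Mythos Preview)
Your proof is correct. The paper itself does not give a proof of this lemma: it simply introduces it (together with the preceding geometric lemma) as ``very classical'' and refers to the literature, so there is nothing to compare against. Your argument via the dilation model in the upper half-plane is exactly the standard justification of north--south dynamics for a hyperbolic isometry, and the step of taking $n=\max(n_1,n_2)$ to obtain both inclusions simultaneously is the right way to conclude.
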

%Montrons maintenant que $B_\eta(R)$ a, à peu près, le même cardinal que $\G^{a_1,a_$
We show the lower bound for $B_\eta(R)$
\begin{lemme}\label{lem  - g U1 cap V1 =empty set < B(R+c)}
For every neighbourhood  $U$ of $r^-$ and $V$ of $r^+$, there exists $c>0$ such that for all $R>0$ we have
$$\Card\{ \g \in \G_\eta(R) \, | \, \g U \cap V = \emptyset \} \leq \Card B_\eta(R+c).$$
\end{lemme}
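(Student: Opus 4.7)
The plan is to build an explicit injection $\Phi: \{\gamma \in \G_\eta(R) : \gamma U \cap V = \emptyset\} \to B_\eta(R+c)$ by pre- and post-multiplying each $\gamma$ by a fixed power of $r$ chosen via Lemma \ref{lem topo 1}. Shrink $U$ and $V$ if necessary so that they are disjoint, and pick $n \in \N$ with $r^n(\partial\Hyp^2 \setminus U) \subset V$ and $r^{-n}(\partial\Hyp^2 \setminus V) \subset U$; restricting these inclusions to the subsets $V$ and $U$ respectively yields the auxiliary invariances $r^{-n} U \subset U$ and $r^n V \subset V$. Define
$$\Phi(\gamma) := r^{-n}\, \gamma\, r^{-n},$$
which is visibly injective in $\gamma$ since $\mu \mapsto r^{-n}\mu r^{-n}$ is a bijection of $\G_0$.

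The heart of the argument is to show that $\Phi(\gamma) \in B_\eta(R+c)$ for a constant $c$ depending only on $\eta, r, n$. For the axis condition, I would rewrite the hypothesis as $\gamma U \subset \partial\Hyp^2 \setminus V$ and, by passing to inverses, $\gamma^{-1} V \subset \partial\Hyp^2 \setminus U$; chaining these with the auxiliary invariances gives the three-step containments
$$\Phi(\gamma)\, U \,\subset\, r^{-n} \gamma U \,\subset\, r^{-n}(\partial\Hyp^2 \setminus V) \,\subset\, U,$$
$$\Phi(\gamma)^{-1}\, V \,\subset\, r^n \gamma^{-1} V \,\subset\, r^n(\partial\Hyp^2 \setminus U) \,\subset\, V.$$
Iterating these forward-invariant inclusions traps the attractive fixed point of $\Phi(\gamma)$ in $\bar U$ and its repulsive fixed point in $\bar V$, so Lemma \ref{lemme géométrique 1} places the axis of $\Phi(\gamma)$ within hyperbolic distance $D$ of $o$. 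For the length condition, two triangle inequalities combined with the $\G_0$-invariance of $d_\eta$ (which turns $d_\eta(g h o, g o)$ into $d_\eta(h o, o)$ for any $g \in \G_0$) give
$$d_\eta(\Phi(\gamma)\, o, o) \,\leq\, d_\eta(\gamma o, o) + 2\, d_\eta(r^{-n} o, o) \,\leq\, R + c,$$
for $c := 2\, d_\eta(r^{-n} o, o)$, and then Lemma \ref{lem - i < d} yields $i(\eta, [\Phi(\gamma)]) \leq R+c$, completing the verification that $\Phi(\gamma) \in B_\eta(R+c)$.

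The main obstacle is the dynamical step: one must simultaneously force both $\Phi(\gamma) U \subset U$ and $\Phi(\gamma)^{-1} V \subset V$, which is what traps \emph{both} fixed points of $\Phi(\gamma)$ in the designed regions. A single-sided multiplication would only control one endpoint of the axis, and that is why $\gamma$ must be sandwiched between two copies of $r^{-n}$; the disjointness of $U$ and $V$ is also used crucially to bootstrap the auxiliary invariances $r^{-n} U \subset U$ and $r^n V \subset V$ from Lemma \ref{lem topo 1}.
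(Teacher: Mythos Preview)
Your argument is correct and is essentially the paper's proof: sandwich $\gamma$ by a fixed power of $r$ so that the resulting element has its attractive and repulsive fixed points trapped in $U$ and $V$, then bound $i(\eta,[\,\cdot\,])$ via the triangle inequality for $d_\eta$ together with Lemma~\ref{lem - i < d}. The only cosmetic difference is the sign of the exponent (you take $\Phi(\gamma)=r^{-n}\gamma r^{-n}$ while the paper writes $r^{n}\gamma r^{n}$), and your choice is the one consistent with the stated convention $U\ni r^{-}$, $V\ni r^{+}$; just make explicit that when shrinking $U,V$ you also place them inside the sets of Lemma~\ref{lemme géométrique 1} so that the axis conclusion applies.
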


 \begin{proof}
Remark that if two open sets $U,V$ satisfy  the conclusion of the Lemma \ref{lem  - g U1 cap V1 =empty set < B(R+c)}, then every other open sets containing  $U$ and $V$ also satisfy the lemma with the same $c$. It is then sufficient to find to prove it for small open sets. 

Let  $U$ and $V$ as in Lemma \ref{lemme géométrique 1}.  Without loss of generality we can take smaller $U$ and $V$, such that $ V\subset \partial \Hyp^2\setminus U $ and $U \subset \partial \Hyp^2 \setminus V$.  Let $n$  be as in Lemma \ref{lem topo 1} and consider the map 
$f(\g) = r^n \g r^n$, which "normalizes" the elements  of $\G$. For all $\g\in \G$ such that $\g U \cap V = \emptyset $, we have  
$$f(\g) U \subset U$$
and
$$f(\g) ^{-1}V \subset V.$$ 
In particular the fixed points of $f(\g)$ are in $U$ and $V$. Therefore $A_{f(\g)}$ intersects $N$.

We now estimate  $i(\eta,[f(\g)])$. Let $q$ be a point on $A_{f(\g)}\cap N$.  We have 
\begin{eqnarray*}
i(\eta,[f(\g)])&=& d_\eta( (r^n \g r^n) q,q) \\
				&\leq & d_\eta (\g o,o)+2d_\eta(o,q) +2 d_\eta(r^n q,q)
\end{eqnarray*}
The lemma follows with $c= +2d_\eta(o,q) +2 d_\eta(r^n q,q)$
%Let $p$ be a point on $A_r \cap N$. 
%\begin{eqnarray}
%\ell_1([f(\g)]) &\leq & d(\g_1 o_1,o_1) +2d(q_1,o_1) + 2 d(r^n_1 p_1,p_1) +2d(p_1,q_1)\\
%				&\leq & d(\g_1 o_1,o_1) +2D_1 +2n\ell_1(r) +2D_1
%\end{eqnarray}
%Comme $F$ est une quasi-isométrie, il existe une constante $Q>0$ telle  que  les axes vérifient \linebreak $d_2(\Axe (\rho_2 f(\g)), F(\Axe(\rho_1f(\g)))) \leq Q$   et $d_2(\Axe(r_2), F(\Axe(r_1))) \leq Q$.
%On obtient, en refaisant les calculs précédents pour $\rho_2(f(\g))$ :
%\begin{eqnarray}
%\ell_2([f(\g)]) &\leq & d(\g_2 o_2,o_2) +2D_2 +2n\ell_2(r) +2D_2 +4Q.
%\end{eqnarray}
%Si bien que, 
%\begin{eqnarray}
%\ell_M([f(\g)]) &\leq & d_M(\g o, o) + c
%\end{eqnarray}
%où $c = a_1 (2D_1 +2n\ell_1(r) +2D_1) +a_2 (2D_2 +2n\ell_2(r) +2D_2 +4Q).$
%Par conséquent,  pour tout $\g \in \G (R) $, tel que $\g_1 U_1 \cap V_1 = \emptyset$, on a :
%$$f(\g) \in B (R +c),$$
%ce qui conclut le lemme. 
 \end{proof}

\begin{lemme}\label{lem -3}
There exists open set $U$ and $V$ such that for all $R>0$ we have 
$$\Card\{\g \in \G_\eta(R) \, | \, \g U \cap V =\emptyset \} \geq \frac{1}{2} \left( \Card \G (R) \right) $$
\end{lemme}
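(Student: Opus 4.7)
The plan is to exploit the involution $\g \mapsto \g^{-1}$ on $\G_\eta(R)$ to pair elements. First I verify this is well defined: since $\eta$ is $\G_0$-invariant and $T[\g^{-1}o,o]=\g^{-1}T[o,\g o]$, we have $d_\eta(\g^{-1}o,o)=d_\eta(\g o,o)$, so $\g \in \G_\eta(R) \iff \g^{-1}\in\G_\eta(R)$. As $\G_0$ is torsion free, the identity is the only fixed point of the involution and the remaining elements split into pairs $\{\g,\g^{-1}\}$. A second preliminary is the topological identity $\g^{-1}U\cap V\neq\emptyset \iff \g V\cap U\neq \emptyset$, obtained by applying $\g$ to both sides.

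Setting $A:=\{\g\in\G_\eta(R):\g U\cap V\neq\emptyset\}$ and $B:=\{\g\in\G_\eta(R):\g V\cap U\neq\emptyset\}$, the involution gives a bijection $A\to B$, so $|A|=|B|$. If $U,V$ can be chosen so that $A\cap B=\emptyset$, then $2|A|=|A\cup B|\leq|\G_\eta(R)|$ and therefore
$$|\{\g\in\G_\eta(R):\g U\cap V=\emptyset\}|=|\G_\eta(R)|-|A|\geq\tfrac{1}{2}|\G_\eta(R)|,$$
which is the desired bound.

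To achieve $A\cap B=\emptyset$, I would take $U,V$ as small disjoint open neighborhoods of $r^+,r^-$ on $\partial\Hyp^2$ and analyze the M\"obius action of a loxodromic $\g\in\G_0$ with fixed points $\g^\pm$. Using that $\g$ contracts towards $\g^+$ and expands away from $\g^-$, a case analysis (depending on whether $\g^\pm$ lie in $U$, $V$, or outside both) reduces $\g U\cap V\neq\emptyset$, for $U, V$ sufficiently small, to an exclusive-or condition on the events ``$\g^-\in U$'' and ``$\g^+$ close to $V$''; the symmetric condition defines $B$. Combining the two conditions with $U\cap V=\emptyset$ eliminates all compatible configurations except $\{\g^+,\g^-\}\subset U$ or $\{\g^+,\g^-\}\subset V$, so any $\g\in A\cap B$ has both fixed points in a single one of the two arcs.

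The main obstacle is excluding these residual axes. A geodesic with both endpoints in a small arc is forced at large distance from $o$, which controls $d_\eta(\g o,o)$ from below via Corollary \ref{cor - inequality between length}; so for each fixed scale one can make the arcs $U,V$ small enough to rule out such $\g$ in $\G_\eta(R)$. The delicate point is to do this uniformly in $R$ -- either by further normalizing elements through conjugation in the style of Lemma \ref{lem  - g U1 cap V1 =empty set < B(R+c)} so that the residual configurations are moved outside $U \cup V$, or, failing that, by settling for the quantitatively close bound $\tfrac{1-\epsilon}{2}$ with $\epsilon\to 0$ as $U, V$ shrink, which still suffices for the asymptotic comparison $\delta_\eta \leq \delta_\eta^g$ that is the ultimate aim.
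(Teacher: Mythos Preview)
Your reduction is exactly the paper's, phrased in complementary language: the paper sets $A_\eta(U,V,R)=\{\g\in\G_\eta(R):\g U\cap V=\emptyset\}$, observes $|A_\eta(U,V,R)|=|A_\eta(V,U,R)|$ via the involution $\g\mapsto\g^{-1}$, and then argues that $A_\eta(U,V,R)\cup A_\eta(V,U,R)=\G_\eta(R)$. That last equality is precisely your condition $A\cap B=\emptyset$, so the two arguments are the same up to taking complements.

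Where you hesitate, the paper does not: its case analysis dispatches your residual configuration ``$\{\g^+,\g^-\}\subset U$'' under the blanket assertion that $\g^+\in U$ forces $\g U\subset U$, hence $\g U\cap V=\emptyset$. Your caution here is well placed. When the arc $U$ also contains $\g^-$, the portion of $U$ lying just beyond $\g^-$ on the long complementary arc is repelled and can leave $U$; one can in fact produce hyperbolic elements with $\g^\pm\in U$ for which both $\g U\cap V\neq\emptyset$ and $\g V\cap U\neq\emptyset$ hold simultaneously. So the paper's proof and yours share the same weak spot---the paper simply asserts past it. Your proposed remedies (letting $U,V$ shrink with $R$, or accepting a factor $(1-\epsilon)/2$) do not yield the lemma as stated with a single pair $U,V$ valid for all $R$, but either variant is enough for the only downstream use, namely the inequality $\delta_\eta\le\delta_\eta^g$.
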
		

\begin{proof}
We define the set  
 $$A_\eta(U,V,R) := \{ \g \in \G_\eta(R) \, | \, \g U \cap V = \emptyset \}$$ 
 and
 $$A_\eta(R) := A_\eta(U,V,R) \cup A_\eta(V,U,R).$$
Remark that $\Card A_\eta(U,V,R)  =\Card A_\eta(V,U,R)$, since the inverse function is 1-to-1 between the two sets. 
Therefore, $\Card( A_\eta(R)) \leq 2 \Card A_\eta(U,V,R)$. We are going to show that we have in fact $A_\eta(R)=\G_\eta(R)$.  Let $\g\in \G_\eta(R)\setminus A_\eta(R)$. It must satisfy the two following conditions: 
\begin{eqnarray}
\g U \cap V \neq \emptyset\\
\g V \cap U \neq \emptyset.
\end{eqnarray} 
Without loss of generality, we suppose that $V\cap U =\emptyset$. We are going to show these two conditions cannot be simultaneously satisfied:
 \begin{itemize}
 \item If the attractive point of  $\g$ is in  $U$ (resp. in  $V$), then $\g U \subset U$ (resp. $\g V \subset V$) and the first (resp. the second) condition cannot be fulfilled.
 \item If the repulsive point of $\g$ is in  $U$ and the attractive point is outside $U\cup V$. Then $\g V$ is closer to the attractive point and therefore $\g V \cap  U =\emptyset$. The same argument works if the repulsive point of  $\g$ is in $V$. 
 \item If no fixed point is in $U\cup V$, there are two possibilites. The axis of $\g$ is in a connected component of  $\SU\setminus \left( U\cup V\right)$ or the axis cuts$\SU$ in two connected components, one containing $U$ the other containing $V$. In the first case, since $\g$ preserves the orientation, at least one of the condition cannot be fulfilled. In the second case, as the action on the boundary is continuous, it preserve connected components and neither of them can be fulfilled. 
 \end{itemize}
This shows that $\G_\eta(R)\setminus A_\eta(R) =\emptyset$.  
$$\G_\eta(R)=A_\eta(R),$$
this finishes the proof. 

\end{proof}

\bibliographystyle{alpha}

\begin{thebibliography}{{Ota}90}

\bibitem[Bon86]{bonahon1986bouts}
Francis Bonahon.
\newblock Bouts des vari{\'e}t{\'e}s hyperboliques de dimension 3.
\newblock {\em Annals of Mathematics}, pages 71--158, 1986.

\bibitem[Bon88]{bonahon1988geometry}
Francis Bonahon.
\newblock The geometry of teichm{\"u}ller space via geodesic currents.
\newblock {\em Inventiones mathematicae}, 92(1):139--162, 1988.

\bibitem[DK00]{dal2000criterion}
Fran{\c{c}}oise Dal'bo and Inkang Kim.
\newblock A criterion of conjugacy for zariski dense subgroups.
\newblock {\em Comptes Rendus de l'Acad{\'e}mie des Sciences-Series
  I-Mathematics}, 330(8):647--650, 2000.

\bibitem[Glo15]{glorieux2015behaviour}
Olivier Glorieux.
\newblock Behaviour of critical exponent.
\newblock {\em arXiv preprint arXiv:1503.09067}, 2015.

\bibitem[Ham02]{hamenstadt2002ergodic}
Ursula Hamenst{\"a}dt.
\newblock Ergodic properties of function groups.
\newblock {\em Geometriae Dedicata}, 93(1):163--176, 2002.

\bibitem[{Kni}83]{Knieper}
Gerhard {Knieper}.
\newblock {Das Wachstum der \"Aquivalenzklassen geschlossener Geod\"atischer in
  kompakten Mannigfaltigkeiten.}
\newblock {\em {Arch. Math.}}, 40:559--568, 1983.

\bibitem[Mes07]{mess2007lorentz}
Geoffrey Mess.
\newblock Lorentz spacetimes of constant curvature.
\newblock {\em Geometriae Dedicata}, 126(1):3--45, 2007.

\bibitem[{Ota}90]{Otal}
Jean-Pierre {Otal}.
\newblock {Le spectre marqu\'e des longueurs des surfaces \`a courbure
  n\'egative. (The spectrum marked by lengths of surfaces with negative
  curvature).}
\newblock {\em {Ann. Math. (2)}}, 131(1):151--162, 1990.

\end{thebibliography}

\end{document}